\definecolor{darkgreen}{rgb}{0,0.6,0}
\definecolor{darkblue}{rgb}{0,0,0.7} 
\definecolor{darkred}{rgb}{0.9,0.1,0.1}
\newcommand{\R}{\mathbb{R}}
\newcommand{\N}{\mathbb{N}}
\newcommand{\C}{\mathbb{C}}
\renewcommand{\div}{\mathrm{div}\, }
\newcommand{\ra}{\rightarrow}
\newcommand{\bc}{\begin{center}}
	\newcommand{\ec}{\end{center}}
\newtheorem{thm}{Theorem}[section]
\newtheorem{cor}[thm]{Corollary}
\newtheorem{Lemma}[thm]{Lemma}
\newtheorem{Proposition}[thm]{Proposition}
\theoremstyle{definition}
\theoremstyle{remark}
\newtheorem{remark}[thm]{Remark}
\providecommand{\keywords}[1]
{
	\small	
	\textbf{\textit{Keywords--}}
}
\title{On the instability of travelling wave solutions for the Transport-Stokes equation}
\author{M. Bonnivard\footnote{Ecole Centrale de Lyon, CNRS, INSA Lyon, Universite Claude Bernard Lyon 1, Université Jean Monnet, ICJ UMR5208,
		69130 Ecully, France,
		matthieu.bonnivard@ec-lyon.fr}, A. Mecherbet\footnote{Université Paris Cité and Sorbonne Université, CNRS, IMJ-PRG, F-75013 Paris, France. mecherbet@imj-prg.fr
}}
\begin{document}
	
	\maketitle	
	\begin{abstract}
		In this paper, we investigate the instability of the spherical travelling wave solutions for the Transport-Stokes system in $\R^3$. First, a  classical scaling argument ensures instability among all probability measures for the Wasserstein metric and the $L^1$ norm. Secondly, we address the instability among patch solutions with a perturbed surface. To this end, we study the linearized system of a contour dynamics equation derived in \cite{Mecherbet2020} in the case where the support of the patch is axisymmetric and described by spherical parametrization. We investigate numerically the existence of positive eigenvalues, which ensures the instability of the linearized system. Eventually we recover numerically the instability of the travelling wave by solving the Transport-Stokes equation using a finite element method on FreeFem.
	\end{abstract}
	\begin{keywords}{}   
		Stokes flow; transport equation; suspensions; instability of travelling wave solutions; numerical simulations
	\end{keywords}
	%	76D07 stokes & related
	%35Q49 transport
	%76T20 suspensions
	%	35P15  	Estimates of eigenvalues in context of PDEs
	% 35C07  	Travelling wave solutions
	%76M10  	Finite element methods applied to problems in fluid mechanics
	
	\section{Introduction}

	In this paper, we consider the \emph{Transport-Stokes equation} which describes sedimentation of small rigid inertial spherical particles in a viscous fluid and is given by
	\begin{eqnarray}
		- \Delta u +\nabla p =- \rho\,  e_3 \quad \textrm{in }\R ^3, \label{StokesR3} \\
		\div u = 0  \quad \textrm{in }\R ^3, \label{Incomp_R3}\\
		\partial_t \rho + \div(\rho u) =0 \quad \textrm{in }\R ^3, \label{Transport_R3}\\
		\rho(t=0)= \rho_0 \quad \textrm{in }\R ^3, \label{Init_rho_R3}\\
		\lim_{|x|\ra \infty} u(x) = 0, \label{limInfini}
	\end{eqnarray}
	where $\rho_0$ is a probability density in $\R^3$. This system has been derived in \cite{Hofer18,Mecherbet2019} as a mean field limit of a large number of particles sedimenting in a Stokes flow. Its well-posedness has been investigated in several papers. We refer to~\cite{Hofer18} for a first well-posedness result for regular initial data. In~\cite{Mecherbet2020}, the author shows global existence and uniqueness of Lagrangian solutions in the set $\mathcal{P}_1(\R^3)\cap L^\infty(\R^3)$ of bounded probability measures with finite first moment, and persistence of patch solutions. In~\cite{Hofer&Schubert}, authors prove global existence and uniqueness of Lagrangian solutions in $\mathcal{P}(\R^3)\cap L^\infty(\R^3)$. Global existence and uniqueness for $L^\infty$ initial data in the case of bounded domains in dimension $2$ and $3$, as well as in an infinite strip in dimension $2$ has been proven in~\cite{Leblond}. We also refer to~\cite{Mecherbet&Sueur} where well-posedness and controllability are established in $\mathcal{P}(\R^3)\cap L^p(\R^3)$, $p\geq3$ and analyticity of the trajectories in case $p>3$. In~\cite{GrayerII}, the author investigates the $2$-dimensional case for compactly supported initial data and proves persistence of patch solutions in this setting, as well as in the case of a torus. Finally, we refer to the recent results~\cite{Cobb} where well-posedness of the transport-fractional Stokes system is investigated, and to~\cite{Inversi} where the author proves the existence of Lagrangian solution for merely $L^1$ initial data, and shows uniqueness in a Yudovich-type space.

	It has been shown by Hadamard~\cite{Hadamard} and Rybczynski~\cite{Rybczynski} in 1911 that spherical patch solutions are travelling wave solutions for the Transport-Stokes system. Such a result has been investigated in the papers~\cite{MNG, EMG} where authors observe through physical experiments that a spherical viscous droplet falling inside a lighter viscous fluid slowly evolves to a torus before breaking up in smaller droplets which reproduce the same scenario. This phenomenon is also observed by solving numerically the system of ODEs describing sedimenting particles at a microscopic level. We emphasize that such a microscopic model has been shown to converge to the Transport-Stokes system as a mean field on any finite time intervals, see \cite{Hofer18,Mecherbet2019}. These results suggest that a slight perturbation of the spherical patch leads to a fundamental different behaviour when time evolves. 

 \medskip 
 In this paper we are interested then in proving the instability of the travelling wave. To our knowledge there is no such result concerning the Transport-Stokes system in this setting. Such stability issues have been investigated in \cite{Gancedo&Granero-Belinchon} for the flat stationary state in the $2$D horizontally periodic strip $\mathbb{T}\times \R$. We also refer to the recent paper \cite{DGL} where stability of decreasing stratified density profile is proven in a periodic channel $\mathbb{T}\times (0,1)$.
	
	We first emphasize that, thanks to a classical scaling argument, any spherical patch of initial density remains spherical with a constant velocity sedimentation that depends on the radius of the droplet. This shows the instability of spherical patch solutions among probability measures in terms of the Wasserstein metric as well as the $L^1$ norm, see Proposition \ref{prop:instability_TS}. 
	
	We consider next the case where we initially perturb  the support of the droplet. This is done by considering the surface evolution of the droplet $1_{B_0}$ through a hyperbolic model derived in~\cite{Mecherbet2020} in the case of axisymmetric initial domains $B_0$ that can be described through a spherical parametrization. { We develop and implement a numerical method to study the stability of the linearized model around the traveling wave solution, relying on the reformulation of the relevant eigenvalue problem as a fixed point problem (see Propositions~\ref{prop_T_lamda} and~\ref{prop_carac_fonction_propre_A}). The results that we obtain point out to the existence of a positive eigenvalue, hence to the instability of the spherical droplet in this context.
 }
	
	Finally, we recover numerically the instability of the travelling wave by solving the Transport-Stokes system using a finite element method on FreeFem. We first reproduce the droplet sedimentation in a vessel that is experimented in \cite{MNG}, see Figure~\ref{Fig:freefall}.  We next consider the truncated domain approach with artificial boundary conditions, see Figure~\ref{Fig:res_nonborne}. We observe the formation of a torus in both simulations, which is reminiscent of the results of \cite{MNG}.

	\section{Instability of patch solutions}\label{Sec:patch_instab}

	\begin{Proposition}
		If $\rho \in L^\infty\cap L^1$ satisfies $\partial_t \rho +\nabla \rho \cdot \mathcal{K}\star \rho=0 $ with $\mathcal{K}=-\mathcal{U}e_3$, then 
		$$
		\bar{\rho}(t,x):=\frac{1}{\lambda^{\alpha+\beta}}\rho\left(\frac{t}{\lambda^\beta}, \frac{x}{\lambda^\alpha}\right)
		$$
		is also a solution for any $\alpha, \beta ,\lambda \in \R\times\R\times\R^\star_+$.
	\end{Proposition}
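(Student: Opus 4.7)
The plan is a straightforward scaling computation, whose only nontrivial input is the homogeneity of the Stokeslet kernel $\mathcal{U}$. Since the Transport-Stokes system in $\mathbb{R}^3$ inverts $-\Delta$ on the whole space, $\mathcal{U}$ is (up to an isotropic prefactor) the Oseen tensor, which is homogeneous of degree $-1$; hence $\mathcal{K}(\mu x)=\mu^{-1}\mathcal{K}(x)$ for every $\mu>0$. This is the single structural fact the proof will exploit.

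First I would apply the chain rule to $\bar\rho(t,x)=\lambda^{-(\alpha+\beta)}\rho(t/\lambda^\beta,x/\lambda^\alpha)$ to obtain
\[
\partial_t\bar\rho(t,x)=\lambda^{-(\alpha+2\beta)}(\partial_t\rho)\!\left(\tfrac{t}{\lambda^\beta},\tfrac{x}{\lambda^\alpha}\right),\qquad
\nabla\bar\rho(t,x)=\lambda^{-(2\alpha+\beta)}(\nabla\rho)\!\left(\tfrac{t}{\lambda^\beta},\tfrac{x}{\lambda^\alpha}\right).
\]
Next I would compute the convolution $(\mathcal{K}\star\bar\rho)(t,x)$. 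Performing the change of variables $y=\lambda^\alpha z$ (so that $dy=\lambda^{3\alpha}dz$, valid because $\lambda>0$ makes $\lambda^\alpha>0$ for any $\alpha\in\mathbb{R}$) gives
\[
(\mathcal{K}\star\bar\rho)(t,x)=\lambda^{2\alpha-\beta}\int_{\mathbb{R}^3}\mathcal{K}\!\left(\lambda^\alpha\bigl(\tfrac{x}{\lambda^\alpha}-z\bigr)\right)\rho\!\left(\tfrac{t}{\lambda^\beta},z\right)dz.
\]
Using the $(-1)$-homogeneity of $\mathcal{K}$, the factor $\mathcal{K}(\lambda^\alpha\,\cdot\,)$ becomes $\lambda^{-\alpha}\mathcal{K}(\cdot)$, and I recover
\[
(\mathcal{K}\star\bar\rho)(t,x)=\lambda^{\alpha-\beta}(\mathcal{K}\star\rho)\!\left(\tfrac{t}{\lambda^\beta},\tfrac{x}{\lambda^\alpha}\right).
\]

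Finally I would assemble the two pieces: the product $\nabla\bar\rho\cdot(\mathcal{K}\star\bar\rho)$ carries the prefactor $\lambda^{-(2\alpha+\beta)}\cdot\lambda^{\alpha-\beta}=\lambda^{-(\alpha+2\beta)}$, exactly matching the prefactor in $\partial_t\bar\rho$. Therefore
\[
\partial_t\bar\rho+\nabla\bar\rho\cdot(\mathcal{K}\star\bar\rho)=\lambda^{-(\alpha+2\beta)}\bigl(\partial_t\rho+\nabla\rho\cdot(\mathcal{K}\star\rho)\bigr)\!\left(\tfrac{t}{\lambda^\beta},\tfrac{x}{\lambda^\alpha}\right)=0,
\]
which concludes the proof. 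The only step that is not purely bookkeeping is the homogeneity argument for $\mathcal{K}$; once that is available, the exponents align automatically, which is in fact the reason the two free parameters $\alpha$ and $\beta$ survive. I do not anticipate any real obstacle beyond clearly invoking that homogeneity and tracking the Jacobian correctly.
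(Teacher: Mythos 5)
Your proof is correct and follows essentially the same route as the paper: exploit the degree $(-1)$ homogeneity of the Oseen kernel $\mathcal{K}$, change variables $y=\lambda^\alpha z$ in the convolution to get $(\mathcal{K}\star\bar\rho)(t,x)=\lambda^{\alpha-\beta}(\mathcal{K}\star\rho)(t/\lambda^\beta,x/\lambda^\alpha)$, then match prefactors against $\partial_t\bar\rho$. The only difference is cosmetic: you spell out why $\lambda^\alpha>0$ justifies the Jacobian and you explain the source of the homogeneity, whereas the paper takes both for granted.
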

	\begin{proof}
		This follows from the fact that $$\left(\mathcal{K}\star \bar{\rho}\right)(t,x)= \lambda^{\alpha-\beta}\mathcal{K}\star \rho \left( \frac{t}{\lambda^\beta},\frac{x}{\lambda^\alpha}\right).$$
		Indeed, using $\mathcal{K}(\lambda x)= \mathcal{K}(x)/\lambda$, we get
		\begin{align*}
			\left(\mathcal{K}\star \bar{\rho}\right)(t,x)&= \int_{\R^3} \mathcal{K}(x-y) \bar{\rho}(t,y) dy \\
			&=\frac{1}{\lambda^{\alpha+\beta}}\int_{\R^3} \mathcal{K}(x-y) \rho\left(\frac{t}{\lambda^\beta}, \frac{y}{\lambda^\alpha}\right) dy  \\
			&=\frac{\lambda^{3\alpha}}{\lambda^{\alpha+\beta}}\int_{\R^3} \mathcal{K}\left(\lambda^\alpha\left(\frac{x}{\lambda^\alpha}-z \right)\right) \rho\left(\frac{t}{\lambda^\beta}, z\right) dy \\
			&=\frac{\lambda^{2\alpha}}{\lambda^{\alpha+\beta}}\int_{\R^3} \mathcal{K}\left(\frac{x}{\lambda^\alpha}-z \right) \rho\left(\frac{t}{\lambda^\beta}, z\right) dy.
		\end{align*}
		This yields 
		\begin{align*}
			& \left[\partial_t \bar{\rho}+\nabla \bar{\rho} \cdot \mathcal{K}\star \bar{\rho}\right](t,x)\\
			&=\left[\frac{1}{\lambda^{{\alpha+2\beta}}}\partial_t \rho +\frac{1}{\lambda^{2\alpha+\beta}} \nabla \rho\cdot \lambda^{\alpha-\beta}\mathcal{K}\star \rho   \right]\left(\frac{t}{\lambda^\beta}, \frac{x}{\lambda^\alpha} \right)=0\, .
		\end{align*}
	\end{proof}
	\begin{cor}\label{Cor:explicit_formula_rho_R}
		Let $R>0$ and $\rho_{R,0}(x)=\frac{1_{B(0,R)}}{|B(0,R)|} $, then the unique solution of the Transport-Stokes system with initial data $\rho_{R,0}$ is
		\begin{equation}\label{Formula:rho_R(t,x)}
			\rho_{R}(t,x)=\frac{1}{R^3}\rho_{1,0}\left(\frac{x}{R}-\frac{t}{|B(0,1)| R^2}c\right)
		\end{equation}
		with $c=-\frac{4}{15}e_3$
	\end{cor}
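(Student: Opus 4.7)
My approach is to apply the scaling invariance from the preceding proposition to the travelling-wave solution associated with the unit ball, whose existence and explicit velocity come from the classical Hadamard-Rybczynski analysis recalled in the introduction.

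First, I will establish the travelling-wave structure at $R=1$: the unique Transport-Stokes solution with initial data $\rho_{1,0}$ is $\rho_1(t,x) = \rho_{1,0}(x - v_* t\, e_3)$ for some scalar $v_*$. The point is that $u = \mathcal{K}\star\rho_{1,0}$ is axisymmetric about $e_3$, and by an explicit spherical-coordinate computation of the Oseen-tensor convolution over $B(0,1)$, its normal component on the boundary sphere takes the form $u\cdot n = v_*\cos\theta$, which is precisely the rigid-translation kinematic condition at velocity $v_* e_3$. Carrying the integration through yields the Hadamard-Rybczynski terminal velocity in the equal-viscosity case---indeed, after subtracting off a hydrostatic pressure, our source-driven Stokes problem in a ball is equivalent to a two-phase drop with equal inner/outer viscosities and density contrast $1/|B(0,1)|$---namely $v_* = -1/(5\pi)$. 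Setting $c := v_*\,|B(0,1)|\,e_3$ produces $c = -\tfrac{4}{15}\,e_3$.

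Second, I invoke the preceding proposition with $\alpha = 1$, $\beta = 2$, $\lambda = R$. The constraints dictating this choice are $\lambda^\alpha = R$ (so that $x/\lambda^\alpha = x/R$ rescales $B(0,1)$ to $B(0,R)$) and $\alpha + \beta = 3\alpha$, i.e.\ $\beta = 2\alpha$ (so that the $R^{-3}$ prefactor keeps the $L^1$ mass equal to $1$). The proposition then gives the Transport-Stokes solution
\[
\bar\rho(t,x) := \frac{1}{R^3}\rho_1\!\left(\frac{t}{R^2},\frac{x}{R}\right) = \frac{1}{R^3}\rho_{1,0}\!\left(\frac{x}{R} - \frac{v_* t}{R^2}\,e_3\right) = \frac{1}{R^3}\rho_{1,0}\!\left(\frac{x}{R} - \frac{t\,c}{|B(0,1)|\,R^2}\right),
\]
which is the announced formula. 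The initial condition $\bar\rho(0,\cdot) = \rho_{R,0}$ follows from the definition, and uniqueness of Lagrangian Transport-Stokes solutions in $\mathcal{P}(\R^3) \cap L^\infty(\R^3)$ recalled in the introduction forces $\bar\rho \equiv \rho_R$.

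The scaling step and the initial-data/uniqueness check are straightforward. The genuine obstacle is the identification of $v_*$, i.e.\ the explicit computation of the Stokes flow generated by a uniformly forced ball and verification of the rigid-translation condition on the sphere. This is either carried out by invoking the Hadamard-Rybczynski formula directly or by an explicit axisymmetric integration of the Oseen tensor against $\mathbf 1_{B(0,1)}$, both standard but somewhat tedious.
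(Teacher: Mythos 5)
Your proof is correct and follows the same scaling argument as the paper: apply the preceding proposition with $\alpha=1$, $\beta=2$, $\lambda=R$ to the $R=1$ travelling wave, then conclude by the uniqueness recalled in the introduction. The only cosmetic difference is in the base case: the paper treats the unnormalized $1_{B(0,1)}$ as the recalled travelling wave with speed $c=-\frac{4}{15}e_3$ and normalizes to $\rho_{1,0}$ via a second application of the scaling proposition (with $\alpha=0$, $\beta=1$, $\lambda=|B(0,1)|$), whereas you directly assert the speed $v_*=-1/(5\pi)$ for the normalized $\rho_{1,0}$ via the Hadamard--Rybczynski formula, which amounts to the same linearity-of-Stokes observation.
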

	\begin{proof} We recall that the unique solution to the Transport-Stokes system for $\rho_0(x)=1_{B(0,1)}$ is given by $\rho: (t,x)\mapsto \rho_0(x-ct)$ with $c=-\frac{4}{15}e_3$. Hence, by the above proposition,
		$$
		\bar{\rho}(t,x):=\frac{1}{|B(0,1)|}\rho\left(\frac{t}{|B(0,1)|},{x}\right)
		$$
		satisfies also a Transport-Stokes equation for the initial condition $\bar{\rho}(0,x)=\frac{1}{|B(0,1)|}\rho\left(0,{x}\right)=\frac{1}{|B(0,1)|}\rho_0\left({x} \right)=\rho_{1,0}(x)$. Thus, by uniqueness,
		\begin{multline*}
			\rho_{1}(t,x)=\frac{1}{|B(0,1)|}\rho\left(\frac{t}{|B(0,1)|},{x}\right)= \frac{1}{|B(0,1)|}\rho_0\left(x-c\frac{t}{|B(0,1)|}\right)\\
			=\rho_{1,0}\left(x-c\frac{t}{|B(0,1)|}\right).
		\end{multline*}
		Analogously,
		$$
		\tilde{\rho}(t,x)= \frac{1}{R^3} \rho_1\left(\frac{t}{R^2},\frac{x}{R}\right)
		$$
		satisfies the Transport-Stokes equation with initial condition 
		$$
		\tilde\rho(0,x)= \frac{1}{R^3} \rho_1\left(0,\frac{x}{R}\right)=\frac{1}{R^3} \rho_{1,0}\left(\frac{x}{R}\right)=\rho_{R,0}(x),
		$$
		hence by uniqueness,
		$$
		\rho_{R}(t,x)=\frac{1}{R^3}\rho_1\left(\frac{t}{R^2},\frac{x}{R}\right)= \frac{1}{R^3} \rho_{1,0}\left(\frac{x}{R}-c \frac{t}{R^2 |B(0,1)|}\right).
		$$
	\end{proof}
	\begin{Proposition}\label{prop:instability_TS}
		Let $R>0$.	 Any travelling wave type solution $\rho_R $ is unstable for the first Wasserstein metric.
	\end{Proposition}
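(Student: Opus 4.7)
The plan is to exploit Corollary~\ref{Cor:explicit_formula_rho_R} and the preceding scaling identity. The key observation is that $\rho_R(t)$ is uniformly distributed on the ball $B(y_R(t), R)$ with centre $y_R(t) := tc/(R|B(0,1)|)$, so its sedimentation speed $|c|/(R|B(0,1)|)$ is a \emph{decreasing} function of the radius. Consequently, perturbing the initial datum by taking a slightly larger ball produces a spherical travelling wave moving at a slightly different speed, and the two centres of mass separate linearly in time, forcing the Wasserstein distance to diverge.

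Concretely, I would choose as competitor $\bar{\rho}_0 := \rho_{R+\epsilon,0}$ for a small $\epsilon>0$. To control the initial Wasserstein distance, I use the radial dilation $T(x) := \frac{R+\epsilon}{R} x$, which by a direct change of variables satisfies $T_\# \rho_{R,0} = \rho_{R+\epsilon,0}$ and has cost $\int_{B(0,R)} |x - T(x)| \, \rho_{R,0}(x)\, dx \leq \frac{\epsilon}{R} \int_{B(0,R)} |x|\, \rho_{R,0}(x)\, dx \leq \epsilon$, hence $W_1(\rho_{R,0}, \bar{\rho}_0) \leq \epsilon$.

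For the lower bound at later times, the Kantorovich--Rubinstein duality applied to the $1$-Lipschitz linear test function $\phi(x) := v\cdot x$ (with $v$ the unit vector in the direction of $\bar\mu-\bar\nu$) yields $W_1(\mu,\nu) \geq |\bar{\mu}-\bar{\nu}|$ for any two probability measures whose centres of mass are $\bar{\mu}$ and $\bar{\nu}$. Applied to $\rho_R(t)$ and $\rho_{R+\epsilon}(t)$, whose centres of mass coincide with $y_R(t)$ and $y_{R+\epsilon}(t)$, this gives $W_1(\rho_R(t), \rho_{R+\epsilon}(t)) \geq |y_R(t) - y_{R+\epsilon}(t)| = \frac{t\,|c|\,\epsilon}{|B(0,1)|\,R(R+\epsilon)}$, which diverges as $t\to\infty$. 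Combining both estimates, for any threshold $\eta>0$ and any $\delta>0$ one picks $\epsilon \in (0,\delta)$ and then $t_\delta$ large enough that the right-hand side exceeds $\eta$, producing the Lyapunov instability statement.

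The argument is essentially explicit, so I do not expect a genuine obstacle; the only mild care is to invoke the uniqueness part of Corollary~\ref{Cor:explicit_formula_rho_R} to identify the solution issued from $\bar{\rho}_0$ with $\rho_{R+\epsilon}$, and to align the Kantorovich--Rubinstein test function with the drift direction $c$ to extract a sharp (purely translational) lower bound for $W_1$.
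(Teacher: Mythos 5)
Your proof is correct and follows essentially the same strategy as the paper: compare $\rho_R$ to a slightly dilated ball $\rho_{R'}$, use the explicit formula of Corollary~\ref{Cor:explicit_formula_rho_R} to see that the two centres move at different speeds $|c|/(R\omega_1)$, and extract the lower bound via a linear Kantorovich--Rubinstein test function aligned with $e_3$. The only cosmetic difference is that you bound the initial $W_1$ from above via an explicit transport map (coupling) rather than via the dual Lipschitz formulation the paper uses, but both give the same estimate $W_1(\rho_{R,0},\rho_{R',0})\lesssim |R'-R|$.
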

	\begin{proof}
		We restrict to the case $R=1$ for simplicity but the proof for any $R$ is analogous.
		The idea is to prove the following claim: 
		\begin{multline*}
			\exists \,\epsilon>0, \forall\, \eta>0, \exists \bar{\rho}_0 \in \mathcal{P}(\R^3)\cap L^\infty(\R^3)\,, \exists T>0 \,  \text{ such that}\\
			W_1(\bar{\rho}_0,\rho_{1,0}) <\eta \text{ and } W_1(\bar{\rho}(t),\rho_1(t)) >\epsilon,\, \forall\, t \geq T
		\end{multline*}
		where $\bar{\rho}$ the unique solution associated to $\bar{\rho}_0$. 
		
		Setting $\bar{\rho}_0=\rho_{R,0}$ and using the relation $\rho_{R,0}(x)=\frac{1}{R^3} \rho_{1,0}(x/R)$, we obtain
		\begin{align*}
			W_1(\rho_{R,0},\rho_{1,0}) &= \underset{\underset{[f]_{\text{Lip}=1}}{f \in \text{Lip}(\R^3)}}{\sup} \left|\int f(x) \rho_{1,0}(x)dx - \int f (x)\rho_{R,0}(x)dx \right|\\
			&=\underset{\underset{[f]_{\text{Lip}=1}}{f \in \text{Lip}(\R^3)}}{\sup} \left|\int f \rho_{1,0} -\int f (Rx)\rho_{1,0}\left({x}\right)dx \right|\\
			&= \underset{\underset{[f]_{\text{Lip}=1}}{f \in \text{Lip}(\R^3)}}{\sup}  \left|\int \left(f(x)-f\left({x}{R}\right) \right) \rho_{1,0}(x) dx\right| \\
			&\leq \left|1-{R} \right| \left(\int |x| \rho_{1,0}(x) dx \right)
		\end{align*}
		which can be small enough for $R$ close enough to $1$. On the other hand, recall that for any time $t\geq 0$,
		$$
		\rho_R(t)=\frac{1}{R^3} \rho_{1,0}\left(\frac{\cdot}{R}-c \frac{t}{R^2\,  \omega_1}\right).
		$$
		where $\omega_1$ is the volume of the unit ball in $\R^3$.
		Hence, by definition of the first Wasserstein distance we get for $f(x)=x_3 \in \text{Lip}(\R^3)$,
		\begin{align*}
			W_1(\rho_R(t),\rho_1(t)) &\geq  \left| \int x_3 \rho_R(t,x)dx - \int x_3\rho_{1}(t,x) \right|\\
			&\geq \left| \int (T_R(x))_3 \rho_{1,0}(x)dx - \int (T_1(x))_3 \rho_{1,0}(x)dx \right|\\
			&= \left| \int( T_R(x)-T_1(x))_3  \rho_{1,0}(x)dx \right|\\
			&=\left| \int\left( (R-1)x_3+c_3 \frac{t}{\omega_1}\left(\frac{1}{R}-1 \right)  \right)\rho_{1,0}(x)dx \right|\\
			&=\left|(R-1) \left(\int x_3 \rho_{1,0}(x) dx \right)+c_3 \frac{t}{\omega_1}\left(\frac{1}{R}-1   \right) \right|
		\end{align*}
		where we used a change of variable in both integrals with $T_R(x)=Rx+c \frac{t}{R \omega_1}$. The first term above is vanishing by symmetry hence 
		$$
		W_1(\rho_R(t),\rho_1(t)) \geq |c_3| \frac{t}{\omega_1}\left|\frac{1}{R}-1    \right|
		$$
		which can be made arbitrary large by taking $t$ large enough.

	\end{proof}

	\begin{Proposition}
		Let $R>0$.	Any travelling wave type solution $\rho_R$ is unstable for the $L^1$ distance on $\R^3$.
	\end{Proposition}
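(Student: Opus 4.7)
The plan is to reuse the family of travelling wave solutions $\rho_{R'}$ as perturbations of $\rho_R$, exactly as in the proof of Proposition~\ref{prop:instability_TS}, but replacing Wasserstein estimates by direct computation of $L^1$ norms of indicator functions. Fix $R>0$ and pick $\bar{\rho}_0 = \rho_{R',0}$ for $R'$ close to $R$. Since $\rho_{R,0}$ and $\rho_{R',0}$ are both normalized indicators of concentric balls, the $L^1$ distance can be computed explicitly as
$$
\|\rho_{R',0}-\rho_{R,0}\|_{L^1(\R^3)} = 2\,\frac{\bigl||B(0,R')|-|B(0,R)|\bigr|}{\max(|B(0,R)|,|B(0,R')|)},
$$
which tends to zero as $R'\to R$. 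This handles the smallness of the initial perturbation: for any $\eta>0$ we can arrange $\|\bar{\rho}_0-\rho_{R,0}\|_{L^1}<\eta$ by choosing $R'$ sufficiently close to $R$.

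For the large-time behaviour, the key ingredient is the explicit formula from Corollary~\ref{Cor:explicit_formula_rho_R}: the unique solution $\rho_{R'}(t,\cdot)$ is the normalized indicator of the ball $B(x_{R'}(t),R')$ with centre $x_{R'}(t) = \frac{c\,t}{\omega_1 R'}$, where $c=-\tfrac{4}{15}e_3$. Similarly $\rho_R(t,\cdot)$ is supported on $B(x_R(t),R)$ with $x_R(t) = \frac{c\,t}{\omega_1 R}$. The distance between the two centres is therefore
$$
|x_R(t)-x_{R'}(t)| = \frac{|c|\,t}{\omega_1}\left|\frac{1}{R}-\frac{1}{R'}\right|,
$$
which grows linearly in $t$ as soon as $R'\neq R$, while both radii stay fixed. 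Consequently there exists $T>0$ (depending on $R,R'$) such that for all $t\geq T$ the two supports are disjoint, and then
$$
\|\rho_{R'}(t)-\rho_R(t)\|_{L^1(\R^3)} = \|\rho_{R'}(t)\|_{L^1} + \|\rho_R(t)\|_{L^1} = 2.
$$
Thus the constant $\epsilon = 1$ (or any number $<2$) works uniformly in $\eta$: for every small $\eta$ we chose $R'$ close to $R$, and for $t\geq T$ the $L^1$ distance jumps to $2$.

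There is no real obstacle here; the argument is strictly simpler than in the Wasserstein case because once the supports of two probability densities become disjoint, their $L^1$ distance saturates at $2$ rather than growing linearly, so the ``large time'' quantification is automatic. The only minor point worth writing out carefully is the computation of $\|\rho_{R',0}-\rho_{R,0}\|_{L^1}$ (splitting the integral over $B(0,\min(R,R'))$ and the annulus), together with the precise choice of $T$ in terms of $|R-R'|$ guaranteeing disjointness of supports. One could also note that the same conclusion holds for any $L^p$ distance with $p\geq 1$ by the same argument, since disjointness of supports forces $\|\rho_{R'}(t)-\rho_R(t)\|_{L^p}^p = \|\rho_{R'}(t)\|_{L^p}^p + \|\rho_R(t)\|_{L^p}^p$.
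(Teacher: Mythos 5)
Your proof is correct and follows essentially the same strategy as the paper's: exploit the explicit travelling-wave formula of Corollary~\ref{Cor:explicit_formula_rho_R}, compute the initial $L^1$ distance between two concentric normalized indicators to make it small, and note that the centres drift apart linearly in $t$ until the supports become disjoint, at which point the $L^1$ distance saturates at $2$. The only cosmetic difference is that you work with general $R$ and a nearby $R'$ directly, whereas the paper reduces to $R=1$ at the outset and perturbs to $\rho_{R,0}$, but the computation is identical up to that normalization.
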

	
	\begin{proof}
		Again we set $R=1$ for simplicity.
		We will prove the following claim:
		\begin{multline*}
			\forall\, \eta>0, \exists \bar{\rho}_0 \in \mathcal{P}(\R^3)\cap L^\infty(\R^3)\,, \exists T>0 \,  \text{ such that}\\
			\|\bar{\rho}_0-\rho_{1,0}\|_{L^1} <\eta \quad  \text{ and } \quad  \|\bar{\rho}(t)-\rho_1(t)\|_{L^1} \geq 2,\, \forall\, t \geq T
		\end{multline*}
		where $\bar \rho$ is the solution associated to the initial datum $\bar \rho_0$.
		
		The idea is, once again, to exploit the explicit formulas given in Corollary~\ref{Cor:explicit_formula_rho_R}. Let $R>0$ different than $1$ and for any $t\geq 0$, we set
		\[
		I_R(t)=\|\rho_{R}(t) - \rho_{1}(t) \|_{L^1(\R^3)}\, ,
		\]
		the $L^1$ distance between the solutions associated respectively to the initial data $\rho_{R,0}$ and $\rho_{1,0}$, at time $t$.

		We first prove that $I_R(0)$ can be arbitrary small, provided that $R$ is close enough to $1$. We denote by $\omega_1$ the volume of the unit ball in $\R^3$ and by $\omega_R$ the volume of the ball of radius $R$, so that $\omega_R=R^3\omega_1$.
		
		Assume that $R<1$. Then, we have
		\begin{align*}
			I_R(0) & = \int_{\R^3}|\rho_{R,0}(x)-\rho_{1,0}(x)|\, dx \\
			& = \int_{B(0,R)} \left| \frac{1}{\omega_R} - \frac{1}{\omega_1}  \right|\, dx + \int_{B(0,1)\setminus B(0,R)}\frac{1}{\omega_1}\, dx \\
			& = \omega_R \left( \frac{1}{R^3}-1\right) \frac{1}{\omega_1} + (\omega_1-\omega_R)\frac{1}{\omega_1} \\
			& = 2(1-R^3)\, .
		\end{align*}
		By similar computations, for $R>1$, there holds
		\[
		I_R(0) = 2\left(1 - \frac{1}{R^3} \right) \, .
		\]
		Hence, for a given $\eta>0$, there exists $R\neq 1$ such that $I_R(0)<\eta$.
		
		Next, we observe that, if $t$ is large enough, then the support of the solutions $\rho_R(t)$ and $\rho_1(t)$ are disjoint. Indeed, by formula~\eqref{Formula:rho_R(t,x)}, $\rho_R(t)$ is supported in the ball of radius $R$ and center $\alpha_R(t):=\frac{tc}{\omega_1\, R}$. In particular, 
		\[
		|\alpha_R(t)-\alpha_1(t)| = \left| \frac{1}{R} - 1  \right| \frac{t|c|}{\omega_1}
		\]
		Hence, there exists $T_R>0$ such that for any $t\geq T_R$, $|\alpha_R(t)-\alpha_1(t)| > R+1$, which implies that the balls $B(\alpha_R(t), R)$ and $B(\alpha_1(t), 1)$ are disjoint.
		
		As a result, for any $t\geq T_R$, 
		\begin{align*}
			I_R(t) & = \int_{\R^3}|\rho_R(t,x)-\rho_1(t,x)|\, dx \\
			& = \frac{1}{\omega_1} \left( \int_{B(\alpha_R(t), R)} \frac{1}{R^3} 1_{B(\alpha_R(t), R)}(x)\, dx +  \int_{B(\alpha_1(t), 1)}  1_{B(\alpha_1(t), 1)}(x)\, dx   \right) \\
			& = \frac{1}{\omega_1} \left( \frac{\omega_R}{R^3} + \omega_1 \right)\\
			& = 2\, .
		\end{align*}
		Thus, the initial datum $\bar \rho_0=\rho_R$ satisfies $\|\bar \rho_0-\rho_{1,0}\|_{L^1}<\eta$ 
		and $\|\bar \rho(t)-\rho_1(t)\|_{L^1}=2$ for any $t\geq T_R$.
	\end{proof}

	\section{Linearization of the surface evolution model in spherical coordinates}\label{Sec:linearization}
	In this section, we consider $\rho_0$ being the characteristic function of a bounded domain $B_0$ and assume that the initial blob is axisymmetric and can be described by a spherical parametrization
	$$
	B_0=\left\{r_0(\theta) \begin{pmatrix}
		\cos(\phi) \sin(\theta)\\
		\sin(\phi) \sin(\theta)\\
		\cos(\theta)
	\end{pmatrix},\ (\theta,\phi)\in[0,\pi]\times[0,2\pi]\right\}.
	$$
	We denote by $B_t$ the domain at time $t$ such that $\rho(t)=1_{B_t}$. In order to use a spherical parametrization, we set $c(t)=(0,0,c_3(t))$ the position at time $t$ of a reference point, and search for $B_t$ of the form $B_t= c(t)+ \tilde{B}_t$ where
	$$
	\tilde{B}_t=\left\{r(t,\theta) \begin{pmatrix}
		\cos(\phi) \sin(\theta)\\
		\sin(\phi) \sin(\theta)\\
		\cos(\theta)
	\end{pmatrix},\ (\theta,\phi)\in[0,\pi]\times[0,2\pi]\right\}.
	$$
	The velocity of the point $c(t)$ can be chosen arbitrarily, in particular to impose that $c(t)$ is transported along the flow meaning that $\dot{c}= u(t,c)$.
	
	We recall the following result, proven in~\cite{Mecherbet2020}:
	
	\begin{Proposition}\label{prop_model}
		$r$ satisfies the following hyperbolic equation
		\begin{equation}\label{equation_r_hyp}
			\left \{
			\begin{array}{rcl}
				\partial_t r + \partial_\theta r A_1[r] &= &A_2[r], \\
				r(0,\cdot)&=& r_0.
			\end{array}
			\right.
		\end{equation}
		In the case where the reference point $c=(0,0,c_3)$ is transported along the flow \emph{i.e.}\! $u(c)=\dot{c}$, we have $c=c[r]=(0,0,c[r]_3)$ and
		\begin{equation}\label{dotc_3}
			\left\{
			\begin{array}{rcl}
				\dot{c}[r]_3(t)&=& -\frac{1}{4}\displaystyle{\int_0^\pi} r^2(t,\bar\theta) \sin(\bar \theta) \left(1-\frac{1}{2} \sin^2(\bar \theta)\right) d \bar \theta ,\\
				c[r]_3(0)&=&0.
			\end{array}
			\right.
		\end{equation}
		The operators $A_1[r]$ and $A_2[r]$ are defined as follows:
		\begin{multline}\label{eqA1}
			A_1[r](t,\theta) := \\
			-\frac{1}{8\pi r(t,\theta)} \int_0^{2\pi} \int_0^\pi \frac{r(t,\bar  \theta) \sin(\bar  \theta) - \partial_\theta r(t,\bar \theta) \cos(\bar \theta)}{\sqrt{\gamma[r](t,\theta,\bar \theta,\phi)}} r(t,\bar \theta) \sin(\bar \theta) \Big( r(t, \theta) \cos(\phi)\\-r(t,\bar \theta)\Big \{ \cos(\bar \theta) \cos( \theta)\cos(\phi)+\sin (\bar \theta) \sin( \theta)  \Big \} \Big) d \bar \theta d\phi +\frac{\dot{c}_3\sin(\theta)}{r(t,\theta)},
		\end{multline}
		\begin{multline}\label{eqA2}
			A_2[r](t, \theta) := \\
			-\frac{1}{8\pi} \int_0^{2\pi} \int_0^\pi \frac{r(t,\bar \theta) \sin(\bar \theta) - \partial_\theta r(t,\bar\theta) \cos(\bar\theta)}{\sqrt{\gamma[r](t,\theta,\bar \theta,\phi)}} r^2(t,\bar \theta) \sin(\bar\theta)\times \\ \Big(-\sin(\theta)\cos(\bar\theta)  \cos(\phi)+\cos( \theta)\sin(\bar\theta) \Big ) d\bar \theta d\phi- \dot{c}_3 \cos(\theta) \, ,
		\end{multline}
		\begin{equation}\label{eqB}
			\gamma[r](\theta,\bar \theta,\phi)= r^2(\theta)+r^2(\bar \theta)-2 r(\theta) r(\bar \theta)(\sin (\theta) \sin (\bar \theta) \cos(\phi) + \cos(\theta) \cos(\bar \theta) ).
		\end{equation}
	\end{Proposition}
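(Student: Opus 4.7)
The plan is to derive equation~\eqref{equation_r_hyp} from the kinematic boundary condition satisfied by $\partial B_t$ viewed as a Lagrangian surface, combined with an explicit evaluation of the Stokes velocity $u$ on that surface in spherical coordinates centered at $c(t)$. Since the patch structure $\rho(t,\cdot)=1_{B_t}$ is preserved in time (as established in \cite{Mecherbet2020}), $\partial B_t$ is advected by the velocity field $u$ solving \eqref{StokesR3}--\eqref{limInfini} with $\rho(t)=1_{B_t}$, and $r(t,\theta)$ is a graph-type description of $\partial B_t$ in the frame centered at $c(t)$.

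First I would parameterize $\partial B_t$ through $\Phi(t,\theta,\phi)=c(t)+r(t,\theta)\, e_r(\theta,\phi)$, with $(e_r,e_\theta,e_\phi)$ the spherical orthonormal frame, and compute the tangents $\partial_\theta \Phi = \partial_\theta r\, e_r + r\, e_\theta$ and $\partial_\phi \Phi = r\sin\theta\, e_\phi$, yielding the outward (unnormalized) normal
\[
N = \partial_\theta \Phi \times \partial_\phi \Phi = r\sin\theta\bigl(r\, e_r - \partial_\theta r\, e_\theta\bigr).
\]
Since $\partial B_t$ is transported by the flow, the kinematic condition $(\partial_t \Phi - u(\Phi))\cdot N = 0$ must hold. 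Using $\partial_t \Phi = \dot c_3\, e_3 + \partial_t r\, e_r$ and the decomposition $e_3 = \cos\theta\, e_r - \sin\theta\, e_\theta$, dividing by $r\sin\theta$ yields
\[
\partial_t r + \partial_\theta r\,\frac{u_\theta(\Phi)+\dot c_3 \sin\theta}{r} = u_r(\Phi) - \dot c_3 \cos\theta,
\]
where $u_r,u_\theta$ denote the radial and polar components of $u$ at $\Phi$. This identifies $A_1[r]$ with $(u_\theta+\dot c_3 \sin\theta)/r$ and $A_2[r]$ with $u_r - \dot c_3 \cos\theta$, and already accounts for the explicit terms $\dot c_3\sin\theta/r$ in~\eqref{eqA1} and $-\dot c_3 \cos\theta$ in~\eqref{eqA2}.

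Next I would evaluate $u$ at the surface point $\Phi$ from the Oseen-tensor representation
\[
u(x) = -\frac{1}{8\pi}\int_{B_t}\Bigl(\frac{e_3}{|x-y|} + \frac{(x-y)(x-y)_3}{|x-y|^3}\Bigr)\, dy.
\]
By axisymmetry I may assume $\phi=0$, so that $\Phi-c = r(\theta)\, e_r(\theta,0)$, and parameterize $y-c = \rho\, e_r(\bar\theta,\bar\phi)$ with $\rho\in[0,r(\bar\theta)]$. Then $|x-y|^2$ coincides with $\gamma[r](\theta,\bar\theta,\bar\phi)$ exactly when $\rho=r(\bar\theta)$, which is the origin of the $\sqrt{\gamma}$ factor in~\eqref{eqA1}--\eqref{eqA2}. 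Projecting $u$ onto $e_r(\theta,0)$ and $e_\theta(\theta,0)$ produces triple integrals in $(\rho,\bar\theta,\bar\phi)$; performing the $\rho$-integration and then an integration by parts in $\bar\theta$ using the identity $r(\bar\theta)\sin\bar\theta - \partial_{\bar\theta}r(\bar\theta)\cos\bar\theta = -\partial_{\bar\theta}(r(\bar\theta)\cos\bar\theta)$ should convert the volume integral into the surface integral over $\partial \tilde B_t$ with the Jacobian-like weight appearing in~\eqref{eqA1}--\eqref{eqA2}. This reduction is the main computational obstacle, as it requires careful Stokeslet algebra to recover the precise bracket terms in~\eqref{eqA1} and~\eqref{eqA2}.

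Finally, formula~\eqref{dotc_3} for $\dot c_3$ follows from the self-transport condition $\dot c = u(c)$: the horizontal components of $u(c)$ vanish by axisymmetry, and a direct calculation in spherical coordinates centered at $c$ gives
\[
u_3(c) = -\frac{1}{8\pi}\int_0^{2\pi}\!\!\int_0^\pi\!\!\int_0^{r(\bar\theta)}\Bigl(\frac{1}{\rho}+\frac{\cos^2\bar\theta}{\rho}\Bigr)\rho^2 \sin\bar\theta\, d\rho\, d\bar\theta\, d\bar\phi = -\frac{1}{4}\int_0^\pi r^2(\bar\theta)\sin\bar\theta\Bigl(1-\tfrac12\sin^2\bar\theta\Bigr)\, d\bar\theta,
\]
which matches~\eqref{dotc_3}. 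Once this, together with the surface reduction for $u_r$ and $u_\theta$ at $\Phi$, is established, the identification of $A_1$ and $A_2$ is complete.
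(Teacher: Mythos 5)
The paper does not actually prove this proposition: it explicitly states ``We recall the following result, proven in~\cite{Mecherbet2020}'' and gives no argument. So there is no internal proof to compare against, and your blind attempt is a reconstruction of the argument in the cited reference.

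Your framework is the right one and, where you carry it out, it is correct. The parametrization $\Phi=c+r\,e_r$, the computation $N=\partial_\theta\Phi\times\partial_\phi\Phi=r\sin\theta\,(r\,e_r-\partial_\theta r\,e_\theta)$, the kinematic condition $(\partial_t\Phi-u(\Phi))\cdot N=0$, the decomposition $e_3=\cos\theta\,e_r-\sin\theta\,e_\theta$, and the resulting identification $A_1[r]=\big(u_\theta(\Phi)+\dot c_3\sin\theta\big)/r$, $A_2[r]=u_r(\Phi)-\dot c_3\cos\theta$ are all correct and exactly account for the explicit $\dot c_3$-terms in \eqref{eqA1}--\eqref{eqA2}. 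The self-transport computation of $\dot c_3$ is also correct: $|c-y|=\rho$, $(c-y)_3^2/|c-y|^3=\cos^2\bar\theta/\rho$, and the $\rho$-integral gives $r^2(\bar\theta)/2$, yielding $u_3(c)=-\tfrac18\int_0^\pi r^2(1+\cos^2\bar\theta)\sin\bar\theta\,d\bar\theta=-\tfrac14\int_0^\pi r^2\sin\bar\theta(1-\tfrac12\sin^2\bar\theta)\,d\bar\theta$, matching \eqref{dotc_3}.

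The one genuine gap — which you flag yourself as ``the main computational obstacle'' — is the surface-integral reduction of $u_r,u_\theta$, and your sketch of it is slightly misdirected. The factor $r(\bar\theta)\sin\bar\theta-\partial_\theta r(\bar\theta)\cos\bar\theta$ does not come from an integration by parts in $\bar\theta$ after performing the $\rho$-integral; a direct $\rho$-integration of the Oseen kernel against the Jacobian $\rho^2\sin\bar\theta$ would produce logarithmic terms, not a clean surface integral. The standard route is to observe that the Stokeslet column is an exact divergence,
$$
G_{i3}(z)=\frac{1}{8\pi}\Big(\frac{\delta_{i3}}{|z|}+\frac{z_iz_3}{|z|^3}\Big)=\frac{1}{8\pi}\,\mathrm{div}_z\!\left[\frac{\delta_{i3}\,z-z_3\,e_i}{|z|}\right],
$$
apply the divergence theorem on $B_t$ to obtain
$$
u_i(x)=\frac{1}{8\pi}\int_{\partial B_t}\frac{\delta_{i3}\,(x-y)\cdot n-(x-y)_3\,n_i}{|x-y|}\,d\sigma(y),
$$
and then substitute the spherical parametrization and $n\,d\sigma=N\,d\bar\theta\,d\bar\phi$ with the $N$ above. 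The factor $r\sin\bar\theta-\partial_\theta r\cos\bar\theta$ is precisely the common coefficient of the horizontal components of $N$, $N_1=r\sin\bar\theta\cos\bar\phi\,(r\sin\bar\theta-\partial_\theta r\cos\bar\theta)$ and $N_2=r\sin\bar\theta\sin\bar\phi\,(r\sin\bar\theta-\partial_\theta r\cos\bar\theta)$; contracting the surface integral with $e_r(\theta,0)$ and $e_\theta(\theta,0)$ and simplifying recovers the bracketed expressions in \eqref{eqA1}--\eqref{eqA2}. Your observation that $r\sin\bar\theta-\partial_\theta r\cos\bar\theta=-\partial_{\bar\theta}(r\cos\bar\theta)$ is true but is not the mechanism producing the term here. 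If you replace the ``$\rho$-integration plus integration by parts'' step by the divergence-theorem identity above, the argument goes through and matches the cited reference's derivation.
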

	A local existence and uniqueness result was proven in \cite[Thereom 2.2]{Mecherbet2020} in the following sense: 
	\begin{thm}
		Let $r_0\in {C}^{0,1}[0,\pi]$ such that $|r_0|_*:= \underset{\theta \in [0,\pi]}{\inf} r_0(\theta) >0$. There exist $T>0$ and a unique $r\in {C}(0,T;{C}^{0,1}(0,\pi))$ satisfying the hyperbolic equation \eqref{equation_r_hyp}. Moreover, there exists a unique associated reference point $c=c[r]\in {C}(0,T)$ satisfying \eqref{dotc_3} .
	\end{thm}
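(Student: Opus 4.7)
The plan is to recast the hyperbolic equation \eqref{equation_r_hyp} as a fixed point problem via the method of characteristics, and then apply Banach's fixed point theorem in a closed subset of $C([0,T];C^{0,1}([0,\pi]))$. Given a function $s \in C([0,T];C^{0,1}([0,\pi]))$ with $|s(t,\cdot)|_* \geq |r_0|_*/2$, the coupled pair $(\dot{c}_3[s], A_1[s], A_2[s])$ is well-defined by \eqref{dotc_3}--\eqref{eqA2}, and I would introduce the linear transport problem
$$
\partial_t \tilde{r} + A_1[s](t,\theta) \partial_\theta \tilde{r} = A_2[s](t,\theta), \qquad \tilde{r}(0,\cdot) = r_0,
$$
solved by the method of characteristics: let $\Theta(t;\theta_0)$ solve $\dot{\Theta} = A_1[s](t,\Theta)$ with $\Theta(0)=\theta_0$, and set $\tilde{r}(t,\Theta(t;\theta_0)) = r_0(\theta_0) + \int_0^t A_2[s](\tau,\Theta(\tau;\theta_0))d\tau$. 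The map $\Phi : s \mapsto \tilde{r}$ is the one on which I want to run the fixed point argument.

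The next step is to set up the functional framework. Fix constants $M_0 = 2\|r_0\|_{C^{0,1}}$ and $m_0 = |r_0|_*/2$, and consider
$$
\mathcal{E}_T = \bigl\{ s \in C([0,T];C^{0,1}([0,\pi])) : \|s\|_{L^\infty_t C^{0,1}_\theta} \leq M_0,\ \inf_{t,\theta} s(t,\theta) \geq m_0 \bigr\},
$$
which is complete for the distance induced by the $L^\infty_t C^0_\theta$ norm (one recovers Lipschitz regularity in the limit by weak-$\star$ compactness). The critical analytical step is then to prove two estimates on $A_1, A_2$: a boundedness estimate
$$
\|A_i[s](t,\cdot)\|_{C^{0,1}([0,\pi])} \leq C(M_0, m_0), \qquad i=1,2,
$$
and a Lipschitz-in-$s$ estimate
$$
\|A_i[s_1](t,\cdot) - A_i[s_2](t,\cdot)\|_{C^0([0,\pi])} \leq C(M_0,m_0)\, \|s_1 - s_2\|_{C^0([0,\pi])}.
$$
The boundedness estimate, combined with Gr\"onwall on the characteristics $\Theta$, will give $\|\tilde{r}(t,\cdot)\|_{C^{0,1}} \leq \|r_0\|_{C^{0,1}} + TC(M_0,m_0)e^{TC(M_0,m_0)}$ and a matching lower bound on $\tilde{r}$, so that $\Phi(\mathcal{E}_T) \subset \mathcal{E}_T$ provided $T$ is small enough. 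The Lipschitz estimate, combined with the classical stability of ODE flows under Lipschitz perturbation of the vector field, then yields
$$
\|\Phi(s_1) - \Phi(s_2)\|_{L^\infty_t C^0_\theta} \leq C(M_0,m_0)\, T \,\|s_1 - s_2\|_{L^\infty_t C^0_\theta},
$$
making $\Phi$ a contraction for $T$ possibly smaller, so Banach's theorem produces the unique $r \in \mathcal{E}_T$. Uniqueness in the whole space $C([0,T];C^{0,1})$ follows by shrinking $T$ further and reabsorbing using the same stability estimate. Existence and uniqueness of $c=c[r]\in C([0,T])$ follows by direct integration of \eqref{dotc_3} once $r$ is known, because the right-hand side depends only on $r$ and is continuous in time.

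The hard part will be establishing the two estimates on $A_1$ and $A_2$, because the denominator $\sqrt{\gamma[s]}$ vanishes on the diagonal $(\theta,\phi)=(\bar\theta,0)$ where the integrand becomes singular. The key observation is that near the diagonal one has $\gamma[s](\theta,\bar\theta,\phi) \asymp m_0^2\bigl( (\theta-\bar\theta)^2 + \sin^2(\theta)\,\phi^2\bigr)$ up to $O(M_0^2)$ corrections, so the integrand behaves like the restriction of a $1/|x|$ singularity in $\R^3$ to a bi-Lipschitz $2$-surface; with the measure $\sin(\bar\theta)\,d\bar\theta\,d\phi$ this is absolutely integrable with bounds depending only on $M_0$ and $m_0$. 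The Lipschitz-in-$\theta$ and Lipschitz-in-$s$ estimates then require a standard commutator-type splitting: one subtracts a principal value on a small spherical cap around the singularity, estimates the cap contribution by the $L^\infty$ bound on $s$ and its Lipschitz constant, and estimates the far-field contribution directly by differentiating under the integral. This is exactly the kind of singular kernel analysis carried out for the Stokes layer potential, so I would refer to \cite{Mecherbet2020} for the explicit computation rather than redo it here.
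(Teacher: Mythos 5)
The paper does not prove this theorem itself: it quotes it verbatim from \cite[Theorem 2.2]{Mecherbet2020}, so there is no in-paper proof to compare against. Your sketch --- recasting \eqref{equation_r_hyp} as a fixed point problem via characteristics, running Banach's theorem on a ball of $C([0,T];C^{0,1}([0,\pi]))$ equipped with the weaker $L^\infty_t C^0_\theta$ distance, and deferring the singular-kernel boundedness and Lipschitz estimates on $A_1[\cdot],A_2[\cdot]$ to the analysis in \cite{Mecherbet2020} --- is indeed the strategy of that reference, and the key integrability input you invoke is exactly the content of \cite[Lemma B.1]{Mecherbet2020}, reproduced here as Lemma~\ref{lemme_int_S2}. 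Two points you leave implicit and should mention if fleshing this out: (i) the characteristic flow $\Theta(t;\cdot)$ must map $[0,\pi]$ into itself, which requires observing that $A_1[s]$ vanishes at $\theta=0$ and $\theta=\pi$ (both the $\dot c_3\sin(\theta)/s$ term and, after the $\phi$-integration against $\cos\phi$, the integral term do vanish there); (ii) the fixed point must be coupled with the ODE \eqref{dotc_3} for $\dot c_3[s]$, since $A_1[s],A_2[s]$ depend on $\dot c_3$, so strictly speaking $\Phi$ acts on the pair $(s,c)$ --- you note this in passing with the notation $\dot c_3[s]$ but it deserves emphasis because the contraction estimate must include the $c$-dependence. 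With those two clarifications, your proposal is a faithful high-level account of the cited proof.
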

	As explained in the reference above, a similar result holds for a reference center $c$ satisfying an arbitrary dynamics provided that we have a uniform bound on $\dot{c}$ and a stability estimate with respect to $r$ if $c=c[r]$.
	In the case where $r_0=1$ with $\dot{c}:=-4/15$ we recover a stationary state for the system corresponding to the travelling wave solution \emph{i.e.}\! $r(t)=1$ with the right center velocity. 
	
	In the following section, we aim to study the linearized model around $r_0=1$ with $c$ given by $\dot{c}:=-\frac{4}{15} e_3$.

	\subsection{Linearized model}
	\begin{Proposition}\label{Prop:DefL}
		The linearized system around $r_0=1$ with $\dot{c}=\dot{c}^\star=-\frac{4}{15}e_3$ reads 
		\begin{equation}\label{eq_linearise}
			\partial_t h + u_0 \partial_\theta h=L[h]
		\end{equation}
		with $u_0(\theta)=-\frac{1}{15}\sin(\theta)$ and $L$ given by
		\begin{equation}\label{formule_L}
			L[h](\theta)= J[h](\theta) + K(\theta)h(\theta),
		\end{equation}
		where
		\begin{equation}\label{Formula:K(theta)}
			K(\theta) = \frac{2}{15}\cos(\theta),  \end{equation}
		\begin{align}
			&J[h](\theta)\notag\\
			&=-\frac{1}{8\pi} \int_{[0,\pi]\times[0,2\pi]} \frac{\sin(\bar \theta) \partial_{\bar \theta} \gamma(\theta,\bar \theta,\phi)}{2\sqrt{\gamma(\theta,\bar \theta,\phi)}}
			\left[\frac{5}{2} h(\bar \theta) \sin(\bar \theta) -h'(\bar \theta) \cos(\bar \theta)\right ] d\bar \theta d\phi\label{formule_J}
		\end{align}
		and $$\gamma(\theta,\bar \theta,\phi)=2-2\sin(\theta) \sin(\bar \theta) \cos(\phi) - 2\cos(\theta) \cos(\bar \theta).$$
		
	\end{Proposition}
	\begin{proof}
		Using the notations from the previous section, the linearized system reads 
		\begin{equation}\label{eq:linearized}
			\partial_t h + A_1[r_0] \partial_\theta h = L[r_0]h
		\end{equation}
		with $r_0=1$. Introducing the notations from~\cite[Appendix 1]{Mecherbet2020}, $A_1[1]$ is given by
		$$A_1[1](\theta)=(\mathcal{U}[1](\theta)-\dot{c}^*)\cdot \partial_\theta e(\theta,0) $$
		where $\dot{c}=-\frac{4}{15}e_3$ and $\mathcal{U}[1]$ corresponds to the solution of Stokes equation with right-hand side $-1_{B(0,1)}e_3$. An explicit formula can be found for instance in the proof of~\cite[Lemma 3.1]{Mecherbet2020} or in~\cite{EMG}
		with $F=-e_3$, $\mu=R_0=1$. We set
		\begin{equation}\label{vitesse_A_1[1]}
			u_0(\theta)=A_1[1](\theta)=- \frac{1}{15} \sin (\theta)
		\end{equation}
		and define $L[1]$ as the linearized of $A_2[r]$ around $r_0=1$, which is given by
		\begin{align*}
			&-8\pi L[1]h\\
			&= \int_{[0,2\pi]\times[0,\pi]} \frac{h(\bar \theta) \sin (\bar \theta) - \partial_\theta h(\bar \theta) \cos (\bar \theta)}{\sqrt{\gamma[1](\bar \theta,\theta,\phi)}} \sin(\bar \theta) \\
			&\quad \times \left[-\sin (\theta) \cos (\bar \theta) \cos (\phi)+ \cos (\theta) \sin (\bar \theta) \right]\\
			&\quad +2\frac{\sin^2(\bar \theta)}{\sqrt{\gamma[1](\bar \theta,\theta,\phi)}}  h(\bar \theta)  \left[-\sin (\theta) \cos (\bar \theta) \cos (\phi)+ \cos (\theta) \sin (\bar \theta) \right]\\
			&\quad - \frac{ h(\bar \theta)+ h(\theta)}{\sqrt{\gamma^3[1](\bar \theta,\theta,\phi)}} \left[-\sin (\theta) \cos (\bar \theta) \cos (\phi)+ \cos (\theta) \sin (\bar \theta) \right]\\
			&\quad \times \sin^2(\bar \theta) \left[1-\sin(\theta) \sin(\bar \theta)\cos( \phi) - \cos(\theta) \cos (\bar \theta) \right]d \bar \theta d \phi
		\end{align*}
		where we used, dropping the dependencies with respect to the variables $\theta$, $\bar \theta$, $\phi$:
		\begin{multline*}
			\gamma[r+h]=\gamma[r]+ 2 r(\theta) h(\theta)+ 2r(\bar \theta)h(\bar \theta)\\-2[r(\theta) h(\bar \theta)+ r(\bar \theta)h(\theta)](\sin(\theta) \sin(\bar \theta)\cos( \phi) + \cos(\theta) \cos (\bar \theta))+o(\|h\|)
		\end{multline*} 
		\begin{align*}
			&\gamma[1+h]^{-1/2}
			\\
			&=\frac{1}{\sqrt{\gamma[1]}}\left(1- \frac{ h(\bar \theta)+ h(\theta)}{\gamma[1]} \Big[1-\sin(\theta) \sin(\bar \theta)\cos( \phi) - \cos(\theta) \cos (\bar \theta)\Big] \right)\\
			&\qquad +o(\|h\|)\\
			&=\frac{1}{\sqrt{\gamma[1]}}- \frac{ h(\bar \theta)+ h(\theta)}{\sqrt{\gamma[1]}^3} \Big[1-\sin(\theta) \sin(\bar \theta)\cos( \phi) - \cos(\theta) \cos (\bar \theta))\Big]\\
			&\qquad +o(\|h\|)
		\end{align*}
		with
		\[
		\gamma[1](\bar \theta, \theta,\phi)={2\left[1-\sin(\theta) \sin(\bar \theta)\cos( \phi) - \cos(\theta) \cos (\bar \theta) \right]}.
		\]
		The above expression of $-8\pi L[1]h$ simplifies to 
		\begin{align*}
			& -8\pi L[1]h\\
			& = \int_{[0,2\pi]\times[0,\pi]} \frac{h(\bar \theta) \sin (\bar \theta) - \partial_\theta h(\bar \theta) \cos (\bar \theta)}{\sqrt{\gamma[1](\bar \theta,\theta,\phi)}} \sin(\bar \theta)\\
			&\quad \times \left[-\sin (\theta) \cos (\bar \theta) \cos (\phi)+ \cos (\theta) \sin (\bar \theta) \right]\\
			&\quad+2\frac{\sin^2(\bar \theta)}{\sqrt{\gamma[1]}}  h(\bar \theta)  \left[-\sin (\theta) \cos (\bar \theta) \cos (\phi)+ \cos (\theta) \sin (\bar \theta) \right]\\
			&\quad- \frac{ h(\bar \theta)+ h(\theta)}{2\sqrt{\gamma[1]}}\sin^2(\bar \theta) \left[-\sin (\theta) \cos (\bar \theta) \cos (\phi)+ \cos (\theta) \sin (\bar \theta) \right] \\
			&=\int_{[0,2\pi]\times[0,\pi]}\frac{\sin (\bar \theta)\left[-\sin (\theta) \cos (\bar \theta) \cos (\phi)+ \cos (\theta) \sin (\bar \theta) \right]}{\sqrt{2\left[1-\sin(\theta) \sin(\bar \theta)\cos( \phi) - \cos(\theta) \cos (\bar \theta) \right]}} \\
			&\quad \times  \left(h(\bar \theta) \sin (\bar \theta) - \partial_\theta h(\bar \theta) \cos (\bar \theta) +2 h(\bar \theta)\sin (\bar \theta)-\frac{h(\bar \theta) + h(\theta)}{2} \sin( \bar \theta) \right) \\
			&=\int_{[0,2\pi]\times[0,\pi]}\frac{\sin (\bar \theta)\left[-\sin (\theta) \cos (\bar \theta) \cos (\phi)+ \cos (\theta) \sin (\bar \theta) \right]}{\sqrt{2} \sqrt{1-\sin(\theta) \sin(\bar \theta)\cos( \phi) - \cos(\theta) \cos (\bar \theta) }} \\
			&\quad\times  \left( \frac{5}{2} h(\bar \theta) \sin (\bar \theta) - \partial_\theta h(\bar \theta) \cos (\bar \theta) -\frac{ h(\theta)}{2} \sin( \bar \theta) \right) d \bar \theta d \phi\, .
		\end{align*}
		Setting $L[1]h:=L[h]$, this shows formula \eqref{formule_L} with $J$ as in \eqref{formule_J} and $K$ corresponding to
		\[
		K(\theta)=\frac{1}{16\pi} \int_{[0,\pi]\times[0,2\pi]} \frac{\sin^2(\bar \theta) \partial_{\bar \theta} \gamma(\theta,\bar \theta,\phi)}{2\sqrt{\gamma(\theta,\bar \theta, \phi)}} d\bar \theta d\phi.
		\]
		
		Let us finally prove formula~\eqref{Formula:K(theta)}. Noticing that $\frac{\partial_{\bar \theta} \gamma(\theta,\bar \theta,\phi)}{2\sqrt{\gamma(\theta,\bar \theta,\phi)}}$ is the derivative of the mapping $\bar \theta \mapsto \sqrt{\gamma(\theta,\bar \theta,\phi)}$ and using integration by parts, we can rephrase the previous expression of $K(\theta)$ as
		\[
		K(\theta) = -\frac{1}{8\pi} \int_{[0,\pi]\times[0,2\pi]} \sqrt{\gamma(\theta,\bar \theta,\phi)}  \sin(\bar\theta) \cos(\bar\theta) d\bar \theta d\phi\, .
		\]
		Setting 
		\[
		x=\begin{pmatrix}
			\sin(\theta) \\ 0 \\ \cos(\theta)
		\end{pmatrix} , \quad 
		y = \begin{pmatrix}
			\sin(\bar \theta) \cos(\phi) \\ \sin(\bar \theta) \sin(\phi) \\ \cos(\bar \theta)
		\end{pmatrix},
		\]
		we see that $|x-y| = \sqrt{\gamma(\theta,\bar \theta,\phi)}$, so that the above expression of $K(\theta)$ can be interpreted using a surface integral on the unit sphere $\mathbb{S}^2$ in $\R^3$, as
		\[
		K(\theta) =  -\frac{1}{8\pi} \int_{\mathbb{S}^2} y_3 |x-y| \, d\sigma(y),
		\]
		where $\sigma$ stands for the surface measure on the sphere. Now, introducing the rotation matrix
		\[
		R = \begin{pmatrix} \cos(\theta) & 0 & \sin(\theta) \\ 0 & 1 & 0 \\  -\sin(\theta) & 0 & \cos(\theta)   \end{pmatrix},
		\]
		the change of variable $y=R z$ and observing that $x=R e_3$, we obtain
		\begin{align*}
			K(\theta) & = -\frac{1}{8\pi} \int_{\mathbb{S}^2} (R z)_3 |R (e_3-z)| \, d\sigma(R z) \\
			& = -\frac{1}{8\pi} \int_{\mathbb{S}^2} (R z)_3 |e_3-z| \, d\sigma(z) \\ 
			& = \frac{1}{8\pi}\sin(\theta) \int_{\mathbb{S}^2} z_1 |e_3-z| \, d\sigma(z) -  \frac{1}{8\pi}\cos(\theta) \int_{\mathbb{S}^2} z_3 |e_3-z| \, d\sigma(z).
		\end{align*}
		By a symmetry argument, it is easy to see that the integral $\int_{\mathbb{S}^2} z_1 |e_3-z| \, d\sigma(z)$ vanishes. As regards the second integral $ \int_{\mathbb{S}^2} z_3 |e_3-z| \, d\sigma(z)$, it can be computed explicitly, coming back to spherical coordinates. 
		%and setting 
		%	\[
		%	\omega = \begin{pmatrix}
			%	\sin(\bar \theta) \cos(\phi) \\ \sin(\bar \theta) \sin(\phi) \\ \cos(\bar \theta)
			%	\end{pmatrix}\, .
		%	\]
		%	This yields
		%	\begin{align*}
			%	\int_{\mathbb{S}^2} \omega_3 |e_3-\omega| \, d\sigma(\omega) & = \int_{0}^\pi \int_0^{2\pi} \cos(\bar \theta) \sqrt{2} \sqrt{1-\cos(\bar{\theta})} \sin(\bar \theta)\, d\phi d\bar \theta \\
			%	& = 2\pi\sqrt{2} \int_{0}^\pi \cos(\bar \theta) \sqrt{1-\cos(\bar{\theta})} \sin(\bar \theta)\,  d\bar \theta\, .
			%	\end{align*}
		%	Using the change of variable $\bar \theta = \arccos(u)$ and integrating by parts, last integral reduces to
		%	\begin{align*}
			%	\int_{0}^\pi \cos(\bar \theta) \sqrt{1-\cos(\bar{\theta})} \sin(\bar \theta)\,  d\bar \theta\, & = \int_{-1}^1 u\sqrt{1-u}\, du \\
			%	& = \frac{2}{3}\left[ -\frac{2}{5}(1-u)^{5/2} - u(1-u)^{3/2} \right]_{-1}^1
			%	\\
			%	& = -\frac{4\sqrt{2}}{15}\, .
			%	\end{align*}
		One obtains $\int_{\mathbb{S}^2} z_3 |e_3-z| \, d\sigma(z) = -\frac{16\pi}{15}$, which gives expression~\eqref{Formula:K(theta)}.
		
	\end{proof}
	
	\subsection{Well-posedness of the linearized model}
	We focus in the following Proposition solely on the non local operator $J$ appearing in the definition of $L$ (see formula~\eqref{formule_L}), since the second part is simply a multiplication by a smooth function $K$ (formula~\eqref{Formula:K(theta)}).

	\begin{Proposition}\label{well_posedness_Lh} \phantom{a}
		\begin{enumerate}[label=\roman*.]
			\item The operator $J$ is continuous from $C^1([0,\pi])$ to $C^1([0,\pi])$, and from $W^{1,p}(0,\pi)$ to $W^{1,p}(0,\pi)$ for $p>2$. \label{item1}
			\item  Let $p>2$. The operator $J$ can be defined as a continuous linear operator from $L^p(0,\pi)$ into $L^\infty(0,\pi)$ (resp. from $C([0,\pi])$ into itself) as follows: \label{item2}
			\begin{align*}
				J[h](\theta)&=-\frac{1}{8\pi} \int_{[0,\pi]\times[0,2\pi]} \Bigg[\frac{\partial_{\bar \theta} \gamma(\theta,\bar \theta,\phi)}{2\sqrt{\gamma(\theta,\bar \theta,\phi)}}
				\left(1+\frac{1}{2}\sin^2(\bar \theta) \right) 
				\\
				&+\sin(\bar \theta)\cos(\bar \theta)\left( \frac{ \partial^2_{\bar \theta} \gamma(\theta,\bar \theta,\phi)}{2\sqrt{\gamma(\theta,\bar \theta,\phi)}}
				-\frac{1}{2}\frac{\left( \partial_{\bar \theta} \gamma(\theta,\bar \theta,\phi)\right)^2 }{2\sqrt{\gamma(\theta,\bar \theta,\phi)}^3}  \right)
				\Bigg] h(\bar \theta)  d\bar \theta d\phi\, .
			\end{align*}
			Moreover, we have the following explicit bound for $\|J\|$, the operator norm of $J$ from $L^p$ to $L^\infty$:
			\begin{equation}\label{eq:borne_norm_J_infty}
				\|J\|\leq     \frac{1}{8} \left(7  \pi^{1/p'}+2   \left (\int_0^\pi \frac{\sin \bar \theta}{\sqrt{1-\cos \bar \theta}^{p'}} d \bar \theta \right)^{1/p'}  \right)\, ,
			\end{equation}
   where $p'$ is the conjugate exponent of $p$.
		\end{enumerate}
	\end{Proposition}
	\begin{cor}\label{coro1}
		For any $h_0\in C^1([0,\pi])$ (resp.\! $h_0\in W^{1,p}(0,\pi)$, $h_0\in L^p(0,\pi)$, $p>2$  or $h_0 \in C([0,\pi])$)  there exists a unique global solution to the linearized system \eqref{eq:linearized} in $C([0,+\infty[, C^1([0,\pi])$ (resp. in $C([0,+\infty[, W^{1,p}(0,\pi))$, $C([0,+\infty[, L^p(0,\pi))$ or $C([0,+\infty[, C([0,\pi])$).	
		In all cases we have
		$$
		h(t,\theta)=h_0(\Theta(0,t,\theta))+\int_0^t Lh(s,\Theta(s,t,\theta)) ds
		$$
		where $\Theta \in C^1([0,+\infty[ \times [0,\pi])$ is the characteristic flow associated to $A_1[1]$ (see \eqref{vitesse_A_1[1]}).
	\end{cor}
	\begin{proof}[Proof of Proposition \ref{well_posedness_Lh}]
		\textbf{Proof of item \ref{item1}.}
		First note that one can rewrite the factor appearing in the integral from formula~\eqref{formule_J} as
		\begin{multline}\label{formule_desingularisante}
			\frac{-\sin(\theta)\cos(\bar \theta) \cos(\phi)+\cos(\theta) \sin(\bar \theta)}{\sqrt{2-2\sin(\theta)\sin(\bar \theta) \cos(\phi)-{2}\cos(\theta) \cos(\bar \theta)}}\\=\frac{-(\sin(\theta) \cos(\phi)-\sin(\bar \theta))\cos(\bar \theta)+(\cos(\theta)-\cos(\bar \theta)) \sin(\bar \theta)}{\sqrt{(\sin (\theta) \cos(\phi)-\sin (\bar \theta))^2+\sin^2(\phi) \sin^2(\theta)+ (\cos (\theta)-\cos(\bar \theta))^2}}.
		\end{multline}
		This shows that this term is not singular and bounded by a constant, hence 
		\begin{equation}\label{eq:bound_L}
			\|J[h]\|_\infty \leq C (\|h\|_\infty+\|h'\|_\infty)
		\end{equation}
		and for any $p\geq 1$,
		\begin{equation}\label{eq:bound_L_bis}
			\|J[h]\|_{L^p} \leq C (\|h\|_{L^p}+\|h'\|_{L^p}).
		\end{equation}
		
		For the derivative, the only term that requires a further explanation is the one coming from deriving the above term with respect to $\theta$. The result can be decomposed as
		\begin{align*}
			I_1+I_2 = &\bigg|\frac{-\cos(\theta)\cos(\bar \theta) \cos(\phi)-\sin(\theta) \sin(\bar \theta)}{\sqrt{2-2\sin(\theta)\sin(\bar \theta) \cos(\phi)-{2}\cos(\theta) \cos(\bar \theta)}} \bigg|\\
			&+\bigg|\frac{-\sin(\theta)\cos(\bar \theta) \cos(\phi)+\cos(\theta) \sin(\bar \theta)}{\sqrt{2-2\sin(\theta)\sin(\bar \theta) \cos(\phi)-{ 2}\cos(\theta) \cos(\bar \theta)}^3} \\
			& \qquad \times(-\cos(\theta)\sin(\bar\theta)\cos(\phi)+\sin(\theta)\cos(\bar\theta))\bigg|.
		\end{align*}
		When multiplying the first term by $\sin (\bar \theta)$ appearing in the definition of $L$, we get an integrable function since, by Lemma~\ref{lemme_int_S2}, the mapping  
		$$(\bar \theta,\phi)\mapsto \frac{\sin (\bar \theta)}{\sqrt{2-2\sin(\theta)\sin(\bar \theta) \cos(\phi)-2\cos(\theta) \cos(\bar \theta)}}$$
		is integrable for any $\theta \in[0,\pi]$. For $I_2$ we use again that 
		\begin{align*}
			&-\cos(\theta)\sin(\bar\theta)\cos(\phi)+\sin(\theta)\cos(\bar\theta)\\
			& =-\cos(\theta)(\sin(\bar \theta)\cos (\phi) - \sin(\theta))+\sin(\theta)(\cos(\bar \theta)-\cos(\theta))
		\end{align*}
		together with 
		\begin{multline*}
			\sqrt{2-2\sin(\theta)\sin(\bar \theta) \cos(\phi)-2\cos(\theta) \cos(\bar \theta)}\\= \sqrt{(\sin(\bar \theta)\cos(\phi)-\sin(\theta))^2+ \sin^2(\phi)\sin^2(\bar \theta)+(\cos(\theta)-\cos(\bar \theta))^2}
		\end{multline*}
		to simplify the singularity in $I_2$ as follows:
		$$
		I_2\leq \frac{C}{\sqrt{2-2\sin(\theta)\sin(\bar \theta) \cos(\phi)-2\cos(\theta) \cos(\bar \theta)}},
		$$
		and conclude again using Lemma~\ref{lemme_int_S2}. 
		As a result, the following estimate holds true: 
		\begin{equation}\label{eq:bound_L'}
			\|\partial_\theta J[h]\|_\infty\leq C (\|h\|_\infty+\|h'\|_\infty).
		\end{equation}
		
		To derive estimates in $L^p$-norm, we first recall that thanks to \eqref{formule_desingularisante}, there is no singularity in the integral hence it is straightforward that $\|J[h]\|_{L^p} \leq C \|h\|_{W^{1,p}}$. As regards the derivative, notice first that by H\"older inequality,
		\begin{align*}
			&\int_0^\pi \Big| \int_{[0,\pi]\times[0,2\pi]}\sin(\bar \theta)(I_1+I_2)(|h(\bar \theta)|+|h'(\bar \theta)|) d \bar \theta d \phi\Big|^p d \theta\\
			&\leq C_p\int_0^\pi \Big| \int_{[0,\pi]\times[0,2\pi]}\frac{\sin(\bar \theta) }{\sqrt{\gamma(\bar \theta,\phi,\theta)}}(|h(\bar \theta)|+|h'(\bar \theta)|)d \bar \theta d \phi \Big|^p d \theta\\
			&\leq C_p\int_0^\pi \Big(\int_{[0,\pi]\times[0,2\pi]}\frac{\sin(\bar \theta) d \bar \theta d \phi}{\sqrt{\gamma(\bar \theta,\phi,\theta)}^{p'}} \Big)^{p/{p'}} \left(\int_0^\pi | h(\bar \theta)|^p+|h'(\bar \theta)|^p  d \bar\theta\right) .
		\end{align*}
		We conclude, using Lemma~\ref{lemme_int_S2} for $\alpha=p'<2$, that for any $p>2$,
		\begin{equation}\label{eq:bound_L'_bis}
			\|\partial_\theta J[h]\|_{L^p}\leq C (\|h\|_{L^p}+\|h'\|_{L^p}).
		\end{equation}
		\textbf{Proof of item \ref{item2}} Let us prove the continuity of $J$ in the $L^p(0,\pi)$ setting, the result in the $C([0,\pi])$ setting holds analogously.
		Recall the definition of $J[h]$:
		\begin{align*}
			& J[h](\theta)\\
			& =-\frac{1}{8\pi} \int_{[0,\pi]\times[0,2\pi]} \frac{\sin(\bar \theta) \partial_{\bar \theta} \gamma(\theta,\bar \theta,\phi)}{2\sqrt{\gamma(\theta,\bar \theta,\phi)}}
			\left[\frac{5}{2} h(\bar \theta) \sin(\bar \theta) -h'(\bar \theta) \cos(\bar \theta)\right ] d\bar \theta d\phi\, ,
		\end{align*}
		with $$\gamma(\theta,\bar \theta,\phi)=2-2\sin(\theta) \sin(\bar \theta) \cos(\phi) - 2\cos(\theta) \cos(\bar \theta).$$
		Using an integration by parts, we have 
		\begin{align*}
			&-\int_{[0,\pi]\times[0,2\pi]} \frac{\sin(\bar \theta) \partial_{\bar \theta} \gamma(\theta,\bar \theta,\phi)}{2\sqrt{\gamma(\theta,\bar \theta,\phi)}}
			h'(\bar \theta) \cos(\bar \theta) d\bar \theta d\phi  \\
			&= \int_{[0,\pi]\times[0,2\pi]}(\cos^2(\bar \theta) -\sin^2(\bar \theta) ) \frac{\partial_{\bar \theta} \gamma(\theta,\bar \theta,\phi)}{2\sqrt{\gamma(\theta,\bar \theta,\phi)}}
			h(\bar \theta) d\bar \theta d\phi\\
			&\quad + \int_{[0,\pi]\times[0,2\pi]}\sin(\bar \theta)\cos(\bar \theta)\left( \frac{ \partial^2_{\bar \theta} \gamma(\theta,\bar \theta,\phi)}{2\sqrt{\gamma(\theta,\bar \theta,\phi)}}
			-\frac{1}{2}\frac{\left( \partial_{\bar \theta} \gamma(\theta,\bar \theta,\phi)\right)^2 }{2\sqrt{\gamma(\theta,\bar \theta,\phi)}^3}  \right)h(\bar \theta) d\bar \theta d\phi\, .
		\end{align*}
		which yields the desired formula. We recall  that by \eqref{formule_desingularisante} 
		$$
		\left|\frac{\partial_{\bar \theta} \gamma(\theta,\bar \theta,\phi)}{2\sqrt{\gamma(\theta,\bar \theta,\phi)}}\right|\leq 1
		$$
		hence
		$$
		\left| \frac{ \partial^2_{\bar \theta} \gamma(\theta,\bar \theta,\phi)}{2\sqrt{\gamma(\theta,\bar \theta,\phi)}}
		-\frac{1}{2}\frac{\left( \partial_{\bar \theta} \gamma(\theta,\bar \theta,\phi)\right)^2 }{2\sqrt{\gamma(\theta,\bar \theta,\phi)}^3}  \right|\leq \frac{2}{\sqrt{\gamma(\theta,\bar \theta,\phi)}}\, .
		$$
		This ensures that $\displaystyle J[h](\theta)=\int_{[0,\pi]\times [0,2\pi]} h(\bar \theta) F(\bar \theta, \theta,\phi)d \bar \theta d \phi$ where for all $\theta, \bar \theta \in (0,\pi)$ and $\phi \in(0,2\pi)$, 
		$$
		\left|F(\bar \theta, \theta,\phi) \right| \leq \frac{1}{8\pi}\left(\frac{7}{2}+\frac{2\sin(\bar \theta)}{\sqrt{\gamma(\theta,\bar \theta,\phi)}}\right)\, .$$
		We conclude by H\"older inequality since,  using an estimate appearing in the proof of Lemma~\ref{lemme_int_S2} for $p>2$, we have 
		\begin{align*}
			%\| F(\theta,\cdot,\cdot)\|_{L^{p'}(0,\pi)\times(0,2\pi)} 
			& \|J[h]\|_\infty \\
			& \leq \frac{1}{8\pi} \left(7 \pi \pi^{1/p'}\|h\|_p +2 \bigg(\int_{(0,\pi)\times(0,2\pi)}\frac{\sin(\bar \theta)}{\sqrt{\gamma(\theta,\bar \theta,\phi)}^{p'}}d \bar \theta d \phi   \bigg)^{1/p'} (2\pi)^{1/p}\|h\|_p   \right)\\
			&\leq \frac{1}{8\pi} \left(7 \pi \pi^{1/p'}\|h\|_p +2 \pi  \bigg (\int_0^\pi \frac{\sin \bar \theta}{\sqrt{1-\cos \bar \theta}^{p'}} d \bar \theta \bigg)^{1/p'}\|h\|_p   \right)\, ,
		\end{align*}
		%\left (\int_0^\pi \frac{\sin \bar \theta}{\sqrt{1-\cos \bar \theta}^{p'}} d \bar \theta \right)^{1/p'}
		which ensures actually that
		$$
		\|J[h]\|_\infty \leq C \|h\|_p\, .
		$$
		with a constant as in \eqref{eq:borne_norm_J_infty}.
	\end{proof}
	\begin{proof}[Proof of Corollary \ref{coro1}]
		Denoting by $\Theta$ the characteristic flow associated to the velocity $A_1[1](\theta)=-\frac{1}{15}\sin(\theta)$, we get that $h$ satisfies the  following fixed point problem
		$$
		h(t,\theta)=h_0(\Theta(0,t,\theta))+\int_0^t Lh(s,\Theta(s,t,\theta))\, ds.
		$$
		Since $L$ is linear continuous, the operator $\mathcal{I}:C([0,t],C^1) \to C([0,t],C^1)$ defined by $$ h \mapsto\left[ \theta \mapsto \int_0^t Lh(s,\Theta(s,t,\theta))\, ds \right]$$ 
		is also linear continuous, and contractant for $t$ small enough. This ensures the local existence and uniqueness of the solution to the linearized system when $h_0\in C^1([0,\pi])$. 
		
		In the  $W^{1,p}$ setting, we also have that $\mathcal{I}: C([0,t],W^{1,p}) \to C([0,t], W^{1,p})$ is linear continuous. Indeed, direct computations using \eqref{vitesse_A_1[1]} show that 
		\begin{equation}\label{Theta}
			\Theta(t,s,\theta)=2\arctan\left( \tan\left(\frac{\theta}{2}\right)e^{\frac{-(t-s)}{15}} \right),
		\end{equation}
		
		\begin{equation}\label{Theta_derivee}
			\partial_\theta \Theta(t,s,\theta)=e^{-\frac{t-s}{15}} \frac{1+\tan^2(y/2)e^{\frac{2(t-s)}{15}}}{\tan^2(y/2)+1},\quad y=\Theta(t,s,\theta)
		\end{equation}
		and then for $s \leq t$, $ |\partial_{\theta}\Theta(s,t,\theta)|\leq  e^{(t-s)/15}$.
		Hence, for any $f \in L^p$, we have $ \|f\circ \Theta(s,t,\cdot)\|_p \leq  \|f\|_{L^p} e^{t/{15p}} $ which yields using \eqref{eq:bound_L_bis}
		$$
		\|\mathcal{I}h (t)\|_{L^p} \leq  t e^{\frac{t}{15p}} \underset{0 \leq s \leq t}{\sup}\|Lh(s)\|_{L^p}\leq C t e^{\frac{t}{15p}} \underset{0 \leq s \leq t}{\sup}\|h(s)\|_{W^{1,p}}.
		$$
		Analogously, thanks to \eqref{eq:bound_L'_bis},
		$$
		\|\partial_\theta \mathcal{I}h (t)\|_{L^p} \leq t e^{\frac{t}{15}\left(1+\frac{1}{p} \right)} \underset{0 \leq s \leq t}{\sup}\|\partial_\theta Lh(s)\|_{L^p}\leq Ct e^{\frac{t}{15}\left(1+\frac{1}{p} \right)} \underset{0 \leq s \leq t}{\sup}\|h(s)\|_{W^{1,p}}.
		$$
		This ensures that $\mathcal{I}: C([0,t],W^{1,p}) \to C([0,t],W^{1,p})$ is contractant for $t$ small enough, which implies local existence and uniqueness. The proof is analogous in the $L^p$ setting.
		
		Eventually, in all cases a standard Gronwall argument ensures that the solution is global.
	\end{proof}

	\subsection{Investigation of the instability of the linearized system}
	
	In the rest of this section we set $A$ the operator formally defined by
	$$
	A= \frac{1}{15}\sin(\theta) \partial_\theta + L \, ,
	$$
	so that the linearized system~\eqref{eq_linearise} reads $\partial_t h = Ah$. As a result, the existence of an eigenvalue $\lambda$ for the operator $A$ with positive real part ensures the instability of system~\eqref{eq_linearise}. Note however that 	$$
	J[h](\pi- \theta) = - J[\tilde{h}](\theta)
, \quad 
	K(\theta)=-K(\pi - \theta)
	$$
where $\tilde{h}(\theta)=h(\pi-\theta)$. This yields
 \begin{Proposition}\label{prop_symetrie_vp}
	If $\lambda \in \C$ is an eigenvalue for the operator $A$ with $h:[0,\pi]\to \C $ its associated eigenvector, then $-\lambda$ is an eigenvalue for the eigenvector $\tilde{h}:\theta\mapsto h(\pi-\theta) $.
\end{Proposition}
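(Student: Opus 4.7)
The plan is to exploit the geometric reflection symmetry $\theta \mapsto \pi - \theta$ (corresponding to the reflection $z \mapsto -z$ of the unit sphere, which reverses the sedimentation axis) and to verify that it sends any eigenmode of eigenvalue $\lambda$ to an eigenmode of eigenvalue $-\lambda$. The eigenvalue equation to satisfy is
\[
\lambda h = -u_0 \, \partial_\theta h + J[h] + K h,
\]
so with $\tilde h(\theta) = h(\pi-\theta)$ I need to show that the right-hand side of the same equation applied to $\tilde h$ equals $-\lambda \tilde h$.

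First I would deal with the two elementary ingredients. The transport coefficient $u_0(\theta) = -\tfrac{1}{15}\sin(\theta)$ is invariant under $\theta \mapsto \pi - \theta$, whereas the multiplier $K(\theta) = \tfrac{2}{15}\cos(\theta)$ changes sign. Together with $\tilde h'(\theta) = -h'(\pi-\theta)$, this immediately gives
\[
-u_0(\theta)\, \partial_\theta \tilde h(\theta) + K(\theta)\tilde h(\theta) = \bigl[(u_0\, \partial_\theta h) - K h\bigr](\pi - \theta).
\]

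The only real work is to check that $J[\tilde h](\theta) = -J[h](\pi - \theta)$. In the integral defining $J[\tilde h](\theta)$ I would perform the change of variable $\bar\theta = \pi - \eta$. The crucial observations are: (i) $\gamma(\theta, \pi - \eta, \phi) = \gamma(\pi - \theta, \eta, \phi)$, since the only $\cos$-dependence of $\gamma$ is through the product $\cos(\theta)\cos(\bar\theta)$ which absorbs the two sign flips; (ii) consequently $\partial_{\bar\theta}\gamma(\theta, \bar\theta, \phi)$ evaluated at $\bar\theta = \pi - \eta$ equals $-\partial_\eta \gamma(\pi - \theta, \eta, \phi)$; and (iii) the bracket $\tfrac{5}{2}\tilde h(\bar\theta)\sin(\bar\theta) - \tilde h'(\bar\theta)\cos(\bar\theta)$ becomes $\tfrac{5}{2} h(\eta)\sin(\eta) - h'(\eta)\cos(\eta)$, because the minus sign from $\tilde h'$ cancels with the one from $\cos(\pi - \eta)$. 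The factor $\sin(\bar\theta)$ and the measure $d\bar\theta\, d\phi$ are invariant, so the integrand picks up a single overall minus sign, yielding the desired identity $J[\tilde h](\theta) = -J[h](\pi - \theta)$.

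Assembling the three identities and using the eigenvalue equation for $h$ at the point $\pi - \theta$, one obtains
\[
-u_0 \, \partial_\theta \tilde h + J[\tilde h] + K\tilde h
= -\bigl[-u_0\, \partial_\theta h + J[h] + Kh\bigr](\pi-\theta)
= -\lambda\, h(\pi-\theta) = -\lambda\, \tilde h(\theta),
\]
so $\tilde h$ is an eigenvector with eigenvalue $-\lambda$. The only potential obstacle is keeping track of signs in the change of variable for $J$; everything else is bookkeeping.
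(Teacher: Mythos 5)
Your proof is correct, and the core argument---verifying via the change of variable $\bar\theta\mapsto\pi-\bar\theta$ that $J[\tilde h](\theta)=-J[h](\pi-\theta)$ and observing $K(\theta)=-K(\pi-\theta)$, hence $L[\tilde h](\theta)=-L[h](\pi-\theta)$---is exactly the paper's (very terse) proof, made explicit. One small point of convention: the paper (see Proposition~\ref{prop_vp} and the numerical section) takes $\lambda$ to be an eigenvalue of the nonlocal operator $L$ alone, not of the full linearized operator $-u_0\partial_\theta + L$ as you wrote; but since you also check that $u_0$ is invariant under $\theta\mapsto\pi-\theta$ so that the transport part anticommutes with the reflection as well, your argument delivers both statements at once, and in particular the paper's.
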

Hence, in order to prove instability, it suffices to show existence of a non vanishing eigenvalue.

 As a consequence of Proposition~\ref{Prop:DefL}, the operator $L$ is continuous from $L^p(0,\pi)$ into itself. However, due to the presence of the advective term $\frac{1}{15}\sin(\theta)\partial_\theta$, the differential operator $A$ is unbounded over $L^p(0,\pi)$. This makes the analysis of the eigenvalue problem associated with $A$ and their numerical approximation difficult. 
	
	In order to circumvent this issue, we rely on a strategy that was proposed by Mika in~\cite{Mika}, in the context of neutronic applications of the linear transport theory. The idea consists in turning an eigenvalue problem involving a transport operator, into a fixed point problem that is built using the resolvent of the advective part of the original operator. Adapted to the present context, this procedure leads to rephrasing the eigenvalue problem associated to $A$ as a fixed point problem for a non local operator. The precise statements are given in the next subsection.
	
	\subsubsection{Reformulation of the eigenvalue problem associated with the operator $A$}
	
	\begin{Proposition}\label{prop_T_lamda}
		Let $p>2$ and $\lambda$ such that $\Re(\lambda)>\frac{1}{15 p }$. The operator $T^\lambda$ defined as  \label{item1_eq_vp}
		\begin{equation}\label{def:tlambda}
			T^\lambda[h](\theta):=\int_{-\infty}^{0}Lh(\Theta(\tau,0,\theta)) e^{\lambda \tau }  d \tau\, ,\quad \theta\in (0,\pi)
		\end{equation}
		is continuous from $L^p(0,\pi)$ to $L^p(0,\pi)$.
	\end{Proposition}
	\begin{remark} \label{remarque}
\begin{enumerate}[label=\roman*.]
    \item  A similar version of this result holds in the $C([0,\pi])$ setting with the assumption $\Re(\lambda) >0$. 
    \item An analogous operator can be constructed in order to search for eigenvalues with negative real part \label{remarque2}
    $$
    G^\lambda[h](\theta):=-\int_0^{+\infty}Lh(\Theta(\tau,0,\theta))e^{\lambda \tau} d \tau
    $$
    for $\Re(\lambda)<0$. However the obtained results are symmetric since a straightforward computation using Proposition \ref{prop_symetrie_vp} together with 
    $$
    \Theta(-t,0,\theta)=\pi-\Theta(t,0,\pi-\theta)
    $$
    yields
    $$
    G[h]^\lambda(\theta)=T^{-\lambda}[\tilde{h}](\pi- \theta)
    $$
    with $\tilde{h}(\theta)=h(\pi - \theta)$. This ensures that the eigenvalues of $T^\lambda$ and $G^\lambda$ are the same (with eigenvectors that differ by symmetry $\sim$).
    \end{enumerate}   
\end{remark}

	%         If $\Re(\lambda)>0$ is an eigenvalue associated to an eigenvector $h$ for the operator $Ah= \frac{1}{15}\sin (\theta) \partial_\theta h + Lh$ we have 
	%  $$\forall \theta\in (0,\pi)\quad h(\theta)=
	% \int_{-\infty}^{0}Lh(\Theta(\tau,0,\theta)) e^{\lambda \tau }  d \tau:= T^\lambda[h](\theta)\, .
	% $$
	% Moreover, $ T^\lambda $ is continuous from $L^p \to L^p$. {\color{red}Analogously, it is continuous from $W^{1,p}$ to $W^{1,p}$?}

	\begin{proof}
		Without loss of generality, let assume that $\lambda \in \R$. The same proof can be adapted to handle the case of a complex eigenvalue.
		
		Let us first prove that $T^\lambda[h]$ is well defined for any $h\in L^p(0,\pi)$. 
		Let $t, s \leq 0$ and fix $q\geq 1$.  
		Using H\"older inequality, we get the estimate
		\begin{align*}
			& \int_0^\pi \left| \int_{t}^{s} Lh(\Theta(\tau,0,\theta)) e^{\lambda \tau\, d\tau } \right|^p d \theta  \\
			&\leq   \left|\int_0^\pi \int_t^{s}|Lh(\Theta(\tau,0,\theta))|^p  e^{ \lambda \tau \frac{p}{q}}d \tau d \theta  \right| \left|\int_{t}^{s} e^{\lambda \tau\frac{p'}{q'}} d \tau \right|^{p-1} \\
			&=  \left| \int_t^{s}  e^{ \lambda \tau \frac{p}{q}} \int_0^\pi |Lh(\Theta(\tau,0,\theta))|^p  d \theta d \tau  \right| \left|\int_{t}^{s} e^{\lambda \tau\frac{p'}{q'}} d \tau \right|^{p-1}\, ,
		\end{align*}
		where $p',q'$ stand respectively for the conjugate exponents of $p,q$. 
		Introducing the change of variable  $z=\Theta(\tau,0,\theta)$ in the above formula, we have $\theta=\Theta(0,\tau,z)=2\arctan(\tan(z/2)e^{\tau/15})$, hence
		\begin{equation}\label{eq:def_G}
			d \theta= e^{\tau/15}\frac{1+\tan^2(z/2)}{1+\tan^2(z/2)e^{2\tau/15}} dz=G(\tau,z) dz\, .
		\end{equation}
		In particular, for all $\tau<0$ and $z\in (0,\pi)$,
		\begin{equation*}
			|G(\tau,z)|= \left| e^{-\tau/15} \frac{1+\tan^2(z/2)}{\tan^2(z/2)+e^{-2\tau/15}}\right|
			\leq e^{-\tau/15}\, .
		\end{equation*}
		We deduce that for any $t,s\leq 0$,
		\begin{align}
			& \int_0^\pi \left| \int_{t}^{s} Lh(\Theta(\tau,0,\theta)) e^{\lambda \tau }\, d \tau \right|^p d \theta \notag \\
			&\leq       \left|q'\frac{e^{\lambda s \frac{p'}{q'}}-e^{\lambda t \frac{p'}{q'}}}{\lambda p'} \right|^{p-1} \left|\int_t^{s}e^{\frac{p}{q} \lambda \tau } \int_0^\pi  \left|Lh(z) \right|^p G(\tau,z) d \theta d \tau\right|  \notag\\
			&\leq \Big(\frac{2q'}{\lambda p'}\Big)^{p-1}\left| \int_{t}^{s} e^{\frac{p}{q} \lambda \tau-  \frac{\tau}{15}  } \int_0^\pi  \left|Lh(z) \right|^p d \theta d \tau \right|\notag\\
			&=\Big(\frac{2q'}{\lambda p'}\Big)^{p-1}  \left\|Lh \right\|_p^p    \frac{\left|e^{(\frac{p}{q} \lambda - \frac{1}{15})s}-e^{(\frac{p}{q} \lambda -\frac{1}{15})t }\right|}{\frac{p}{q} \lambda - \frac{1}{15} } \label{LpestimateIntegraletas}
		\end{align}
		where we have chosen $q\geq 1$ such that $ \frac{p}{q} \lambda - \frac{1}{15}>0$. The existence of such a $q$ results from the assumption $\lambda>\frac{1}{15 p}$.

		Thus, if we set for $t<0$
		\begin{equation}\label{Def:F}
			F(t,\theta) =\int_t^0 Lh(\Theta(\tau,0,\theta)) e^{\lambda \tau } d \tau\, , 
		\end{equation}
		inequality~\eqref{LpestimateIntegraletas} with $s=0$ and $\frac{p}{q} \lambda - \frac{1}{15} >0$ shows that 
		$$
		\|F(t,\cdot)\|_p \leq  C_{q,p,\lambda}\left\|Lh \right\|_p    \left(\frac{1-e^{(\frac{p}{q} \lambda -\frac{1}{15})t}}{\frac{p}{q} \lambda - \frac{1}{15}} \right)^{1/p}\, ,
		$$
		with $C_{q,p,\lambda}=\big(\frac{2q'}{\lambda p'} \big)^{1/p'}$.
		Hence $F(t,\cdot) \in L^p$ for any $t<0$ and moreover, for any sequence $t_n \to -\infty$, $F(t_n,\cdot)$ is a Cauchy sequence in $L^p$ thanks to \eqref{LpestimateIntegraletas} and therefore has a limit when $n \to \infty$, which is independent on the choice of the sequence $(t_n)_n$. This means that we can take $t \to - \infty$ in $F(t,\cdot)$ and set 
		$$
		T^\lambda[h](\cdot):= \underset{n \to \infty}{\lim} F(t_n,\cdot) \text { in } L^p(0,\pi)\, ,
		$$
		which satisfies 
		$$
		\|T^\lambda[h]\|_p \leq  C_{q,p,\lambda}\left\|Lh \right\|_p    \left(\frac{1}{\frac{p}{q} \lambda - \frac{1}{15}} \right)^{1/p}\, .
		$$
		By Proposition~\eqref{well_posedness_Lh} and definition~\eqref{formule_L}, $L$ is a continuous operator from $L^p(0,\pi)$ to itself, so the above estimate implies the continuity of $T^{\lambda}$ and concludes the proof of Proposition~\eqref{prop_T_lamda}. 
	\end{proof}

	The introduction of operator $T^\lambda$ defined by~\eqref{def:tlambda} is motivated by the following proposition.
	
	\begin{Proposition}\label{prop_carac_fonction_propre_A}
		Let $p>2$ and $\lambda$ such that $\Re(\lambda)>\frac{1}{15 p }$. A function $h \in L^p(0,\pi)$ is an an eigenfunction for the operator $A$ associated to the eigenvalue $\lambda$ if and only if 
		\begin{equation}\label{FixedPointTlambda}
			T^\lambda[h]=h\, .
		\end{equation}
	\end{Proposition}

	\begin{proof}
		Since the proof is rather lengthy and computational, we postpone it to Appendix~\ref{Appendix:proofPropA}. However, we give here the formal computations that lead to the introduction of operator $T^\lambda$ and to the fixed point problem~\eqref{FixedPointTlambda}.
	
		Let $\lambda$ and $h$ an eigenvalue and its associated eigenvector for the operator $A$. By definition of $L$ (see Proposition~\ref{Prop:DefL}), one has
		\begin{equation}\label{Def:h_vp_A}
			- \frac{1}{15}\sin (\theta) h'(\theta)  + \lambda h(\theta) =Lh (\theta)\, .
		\end{equation}
		Evaluating the previous relation in $\Theta(t,s,\theta)$ yields
		$$
		- \frac{1}{15}\sin (\Theta(t,s,\theta)) h'(\Theta(t,s,\theta))  + \lambda h(\Theta(t,s,\theta)) =L h (\Theta(t,s,\theta))
		$$
		which reads equivalenty
		\begin{equation*}
			\frac{d}{dt} \left (e^{\displaystyle \lambda t  }  h(\Theta(t,s,\theta))   \right)  =Lh (\Theta(t,s,\theta))e^{\displaystyle \lambda t }\, .
		\end{equation*}
		Hence, integrating between $s$ and $t$, we get
		$$
		e^{\displaystyle \lambda t }  h(\Theta(t,s,\theta)) \notag\\
		= e^{\displaystyle \lambda s }  h(\theta)  + \int_s^t  Lh (\Theta(\tau,s,\theta))e^{\displaystyle \lambda \tau  } d \tau\,
		$$
		which yields formally the desired formula when taking $s=0$ and $t \to - \infty$ for $\Re(\lambda)>0$ in a $L^\infty$ setting. We refer to Proposition~\ref{prop_T_lamda} for a rigorous justification of the passage in the limit $t \to - \infty$ in an $L^p$ setting.
	\end{proof}

	% {\color{blue}Hence when considering the weak formulation for for some test function $\psi$ we get 
		% \begin{align*}
			% &\int_0^\pi \int_{-\infty}^{0}Lh(\Theta(\tau,0,\theta)) e^{\lambda \tau }  d \tau \psi(\theta)d \tau  d \theta =\int_{-\infty}^{0}  \int_0^\pi Lh(\Theta(\tau,0,\theta)) e^{\lambda \tau }  d \tau \psi(\theta) d \theta  d \tau \\
			% &= \int_{-\infty}^{0}e^{\lambda \tau } \int_0^\pi Lh(z)    \psi(\Theta(0,\tau,z)) G(\tau,z) d z  d \tau \\
			% &\leq \int_{-\infty}^{0} e^{(\lambda -\frac{1}{15})\tau} \int_0^\pi| Lh(z)|    |\psi(\Theta(0,\tau,z))|  d z  d \tau \\
			% &\leq \|\psi\|_{p'}  \|Lh \|_p \int_{-\infty}^{0} e^{(\lambda -\frac{1}{15}+\frac{p'}{15})\tau}   d \tau 
			% \end{align*}
		% where we used 
		% $$
		% \int_0^\pi |\psi(\Theta(0,\tau,z))|  ^{p'} dz = \int_0^\pi |\psi(\theta)|^{p'}G(-\tau,\theta) d \theta \leq e^{+\tau/15} \int_0^\pi |\psi(\theta)|^{p'} d \theta
		% $$
		% This shows that the change of variable is well defined for any $\lambda>0$.
		% }
	\subsection{Numerical investigation of the eigenvalues of the linear operator $T^\lambda$}

\paragraph{Description of the numerical method}
 
	Following Proposition~\ref{prop_carac_fonction_propre_A},  we aim to investigate numerically the existence of $\lambda$ with positive real part for which the operator $T^\lambda$ admits a non trivial fixed point, or equivalently, an eigenvalue equal to $1$. Using a finite element method, we will thus approach the eigenvalues of the operator $T^\lambda$ for several values of $\lambda$. 
	This leads us to address the following problem: given $p>2$ and $\lambda$ such that $\Re (\lambda)>1/15p$, find $ h\in L^p(0,\pi)$  and $\mu \in \C$ such that
	$$
	\int_0^\pi T^\lambda [h]  \psi = \mu \int_0^\pi h  \psi 
	$$
	for any test function $\psi \in L^{p'}(0,\pi)$. We use the set of  Legendre polynomials $\{h_k\}_{k\geq 1}$ as an orthonormal basis (in the $L^2(0,\pi)$ sense) in order to approach both $h\in L^p(0,\pi)$ and  $\psi\in L^{p'}(0,\pi)$. We introduce then the set $V_K=\text{vect} \{ h_i\}_{1\leq i \leq K}$.
	\medskip
	
	In order to handle the infinite integral defining $T^\lambda$, we first introduce the truncated operator $T^{\lambda,S}$ defined as 
	$$
	T^{\lambda,S}[h]= \int_{-S}^{0}  Lh (\Theta(\tau,0,\theta))e^{\lambda \tau }d \tau\, ,
	$$
	for some $S>0$. This new operator is an approximation of $T^\lambda$. Indeed, following the proof of  Proposition \ref{prop_T_lamda}, we have for any $h\in L^p$ and any $q\geq 1$ such that $ \frac{p}{q} \lambda - \frac{1}{15}>0$,
	\begin{align}\label{erreur_T^lambda_trunc}
		\|T^\lambda[h]- T^{\lambda,S}[h] \|_p &\leq  C_{q,p,\lambda}\left\|Lh \right\|_p    \left(\frac{e^{-S (\frac{p}{q} \lambda - \frac{1}{15})}}{\frac{p}{q} \lambda - \frac{1}{15}}\right)^{1/p} \notag \\
		&\leq  C_{q,p,\lambda} \left(\pi^{1/p} \left\|J \right\|+\frac{2}{15}  \right) \|h\|_p  \left(\frac{e^{-S (\frac{p}{q} \lambda - \frac{1}{15})}}{\frac{p}{q} \lambda - \frac{1}{15}}\right)^{1/p} 
	\end{align}
	where $C_{q,p,\lambda}=\big(\frac{2q'}{\lambda p'} \big)^{1/p'}$ and $\|J\|$ is the norm operator of $J$ from $L^p$ to $L^\infty$ given by \eqref{eq:borne_norm_J_infty}. Hence, for a fixed error $\eta>0$, we may choose $S \geq S(\lambda)>0$ with 
	\begin{equation}\label{eq:fomule_S_lamda}
		S(\lambda) =  \frac{-1 }{\frac{p}{q}\lambda -\frac{1}{15}}\ln \left[ \left(\frac{p}{q} \lambda -\frac{1}{15}\right) \left(\frac{\eta }{{C}_{q,p,\lambda} (\pi^{1/p} \|J\| + \frac{2}{15})}\right)^p\right]
	\end{equation}
	such that for any $ h\in L^p$ with $\|h\|_p=1$,
	$$\|T^\lambda[h]- T^{\lambda,S}[h] \|_p \leq \eta\, . $$
	\medskip
	
	We aim then to solve numerically the following problem: find $x\in \C^K$ and $\mu^{\lambda,S} \in \C$ such that 
	$$
	A^{\lambda,S} x  = \mu^{\lambda,S} x 
	$$
	where $A\in M_K(\R)$ is given by $$A^{\lambda,S}_{i,j}=\int_{(0,\pi)}T^{\lambda,S} [h_j] \, h_i.$$ Hence the problem reduces to finding the eigenvalues $\mu^{\lambda,S}$ and the corresponding eigenvectors of the matrix $A^{\lambda,S}$; more precisely, we are interested in the existence of the eigenvalue $\mu^{\lambda,S}=1$. 
	
	In order to compute the integral appearing in the definition of $A^{\lambda,S}_{i,j}$, we use the change of variable $z=\Theta(\tau,0,\theta)$ for a fixed $\tau \in(-S,0)$. This yields
	\begin{align*}
		&\int_0^\pi T^{\lambda,S} [h_j](\theta) \, h_i(\theta) d \theta  \\
		&= \int_0^\pi \left(\int_{-S}^0 L[h_j](\Theta(\tau,0,\theta))e^{\lambda \tau} d \tau \right)h_i(\theta) d \theta \\
		&=\int_{-S}^0  e^{\lambda \tau} \left(\int_0^\pi L[h_j](\Theta(\tau,0,\theta))e^{\tau}  h_i(\theta) d \theta \right) d \tau \\
		&=\int_{-S}^0  e^{\lambda \tau} \left(\int_0^\pi L[h_j](z)  h_i(\Theta(0,\tau,z))G(\tau,z) d z \right) d \tau \\
		&=\int_0^\pi L[h_j](z)\left( \int_{-S}^0  e^{\lambda \tau}   h_i(\Theta(0,\tau,z))G(\tau,z) d \tau \right) d z
	\end{align*}
	with $G$ defined in \eqref{eq:def_G}.
	We denote by $(\theta_p)_{0 \leq p \leq n_\theta}$ a subdivision of $[0,\pi]$ with $\theta_p=p\, \Delta_\theta$ and $\Delta_\theta= \frac{\pi}{n_\theta}$.
	In Python, we use the scipy package and more precisely
	\begin{itemize}
		\item the function integrate.dblquad to compute the integral defining $L[h_j](\theta_p)$ for $p=0 \cdots n_\theta$, $j=1, \ldots,  K$;
		\item the function  integrate.quad to compute the integral with respect to the variable $\tau \in (-S,0)$;
		\item the Simpson quadrature method to compute the integral with respect to the variable $z \in (0,\pi)$.
	\end{itemize}

\paragraph{Choice of parameters and numerical results}
 
{In all the following computations, we set $p=3$ and consider values of $\lambda$ greater or equal to $0.04$. In particular, $\lambda>\frac{1}{15 p }$ so we can apply Propositions~\ref{prop_T_lamda} and~\ref{prop_carac_fonction_propre_A}. We define $q=\frac{9}{10}15p \lambda>1$, which ensures that the condition $ \frac{p}{q} \lambda - \frac{1}{15}>0$ (required to derive the upper bound~\eqref{erreur_T^lambda_trunc}) is fulfilled.
To any tested value of $\lambda$, we will associate the error $E(\lambda)$ defined as
$$E(\lambda)=\min \left\{  |\mu^{\lambda,S}-1|, \, \mu^{\lambda,S}\in \C \text{ eigenvalue of } A^{\lambda,S}\right\}\, ,$$
where $S$ will be set uniformly with respect to the considered range of $\lambda$.

We start by considering $\lambda\in  (\lambda_j)_{0\leq j \leq n_\lambda}$ ranging from $0.04$ to $0.2$.
	Using \eqref{eq:fomule_S_lamda} we choose $S=5200>S(0.04)$ associated with the truncation error $\eta=10^{-3}$. A first numerical investigation shows that the error $E$
 reaches a minimal value in the sub-interval $[0.04,0.07]$ for $n_\theta=100,\, 200,\, 400 $ with $K=5,\ldots, 14$, see  Figure \ref{fig:plot_lamda1} for the error for $n_\theta=200$ (the results being similar for $n_\theta=100$ and $n_\theta=400$).
 
 We then perform a refined computation of $E$ for $\lambda\in [0.04,0.07]$, with a step $\Delta_\lambda=0.001$ and $K=6, 8, \ldots, 14$ (see Figure~\ref{fig:plot_lamda}). For $K$ between $8$ and $14$, we observe that the function $\lambda\mapsto E(\lambda)$ is likely to possess a local minimum. In order to estimate it more precisely, we use the optimization function minimize\_scalar in Python to compute the minimal value of $E$ and its argument $\lambda^\star$. The results of this procedure are represented in Figure~\ref{fig:plot_lamdastar}.

 Let us stress that for $K=7, 8, \ldots, 14$, the optimal value $E(\lambda^*)$ is of order $10^{-5}$, independently on the choice of parameter $n_\theta=100, 200, 400$. In this range of values of $K$, the function $K\mapsto \lambda^*$ appears regular and increasing, and tends to stabilize to a value located between $0.06$ and $0.065$. Note that up to $K=16$, the graph of the function $K\mapsto \lambda^*$ is invariant when one replaces $n_\theta=200$ by $n_\theta=400$. 
 
 However, when $K$ exceeds $16$, some numerical instabilities arise: $\lambda^*$ has no longer a smooth behaviour with respect to $K$, and moreover the associated error $E(\lambda^*)$ deviates from zero. This suggests that the accuracy of the computation, for fixed $n_\theta$, cannot be guaranteed if $K$ goes beyond a certain threshold. Hence, extending the presented results to larger values of $K$ (\emph{i.e.}, using Legendre polynomials of larger degrees) would certainly require to enhance precision of the quadrature methods used to compute the matrix $A^{\lambda,S}$. Another possibility would be to find a more adapted finite element basis. For instance, the set $\mathrm{Vect}(\cos^k)_{k\in \N}$ is naturally adapted to the problem since it is invariant under the operator $A$. Unfortunately, such a basis is not orthonormal and yields multiplication of the finite element matrix $A^{\lambda,S}$ by a dense matrix having small coefficients leading to additional numerical fluctuations.

\paragraph{Conclusion} Numerical investigation of the existence of a non vanishing eigenvalue for the operator $A$ has been carried out through the search of a fixed point for the operator $T^\lambda$ (in the case $\Re(\lambda)>0$), using a finite element method involving bases of Legendre polynomials of maximal degree $K$. In the range $7\leq K \leq 14$ and for several choices of the discretization parameter $n_\theta$, our computations point out the existence of a positive $\lambda^*$ such that the stiffness matrix $A^{\lambda^*,S}$ exhibit at least one complex eigenvalue whose distance from $1$ is of order $10^{-5}$. This striking behaviour can be interpreted as a strong indication of the existence of $\lambda$ greater than $1/{15p}$ (in case $p=3$) such that the continuous operator $T^{\lambda}$ admits non zero fixed points over $L^p(0,\pi)$.

Although the identified value of $\lambda^*$ tends to stabilize as $K$ increases, we are not able yet to achieve convergence due to numerical instabilities that could be explained by the presence of several integrals in the definition of $T^{\lambda}$, and in particular the non-local nature of the operator $J$.
Further investigations in this direction would certainly require greater computational resources as well as an improvement of the accuracy of the implemented integration methods.

 }

	\begin{figure}
		\centering
				\includegraphics[width=100mm]{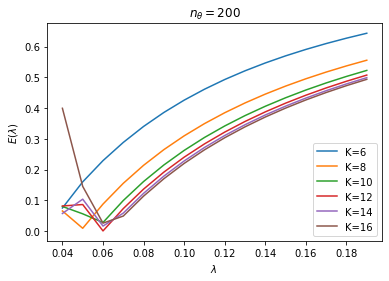}

		\caption{Evolution of the error $E(\lambda)$ in terms of $\lambda$ ranging from $0.04$ to $0.2$ with a step $\Delta_\lambda=0.01$ for $n_\theta=200$ and $K=6,8,\ldots,16$ .}
		\label{fig:plot_lamda1}
	\end{figure}

	\begin{figure}
		\centering
		\begin{tabular}{l }
			{
				\includegraphics[width=100mm]{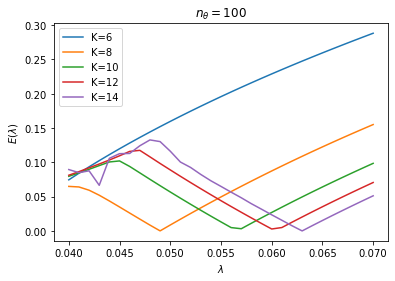}
			}\\
			{
				\includegraphics[width=100mm]{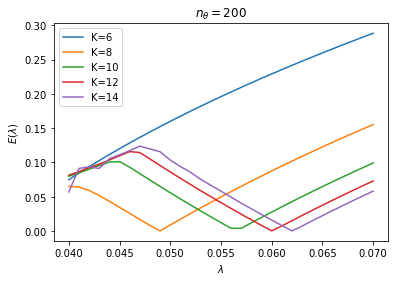}
			}\\
			{
				\includegraphics[width=100mm]{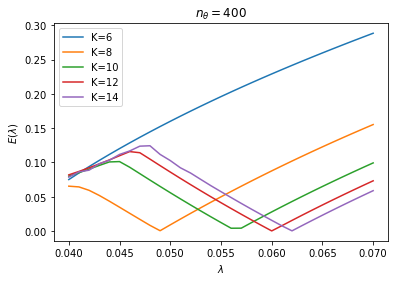}
			}
		\end{tabular}
		\caption{Evolution of the error $E(\lambda)$ in terms of $\lambda$ ranging from $0.04$ to $0.07$ with a step $\Delta_\lambda=0.001$.}
		\label{fig:plot_lamda}
	\end{figure}
	%%%%%%%%%%%%%%%%%%%%%%%%%%%%%%%%%%%%%%%%%%%%%%%%%
	\begin{figure}
		\centering
		\begin{tabular}{l }
			{
				\includegraphics[width=100mm]{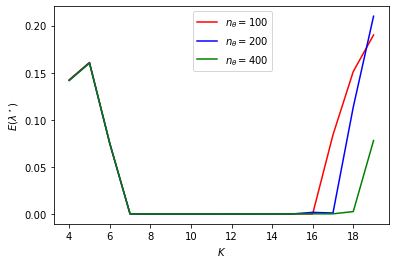}
			}\\
			{
				\includegraphics[width=100mm]{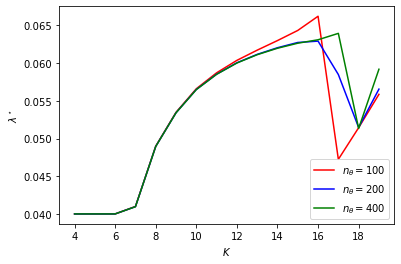}
			}
		\end{tabular}
		\caption{Values of $\lambda^\star$ and $E(\lambda^\star)$ for $\lambda\in [0.04,0.07]$ in terms of $K=4,\ldots,19$.}
		\label{fig:plot_lamdastar}
	\end{figure}

	\section{Simulations of transport-Stokes equation using a finite element method}\label{Sec:FEM}

	In order to perform accurate numerical simulations of the transport Stokes equation~\eqref{StokesR3}--\eqref{limInfini}, involving Stokes equation on $\R^3$, we rely on the observation that, if the initial density $\rho_0$ is axisymmetric, \emph{i.e.}\! invariant under rotation around the vertical axis $e_3$, then by rotational invariance of Stokes equation under such rotation, the density $\rho(t,\cdot)$ remains axisymmetric at any time $t>0$. This allows us to perform a reduction of dimension and to simulate axisymmetric, $2d$ versions of the different models under consideration in this section.

	\subsection{Fall of a suspension of particles in a cylindrical container}\label{Section:freefall}

	In this section, we aim at reproducing numerically the experiment conducted in~\cite{MNG}, where spherical suspensions of particles are injected in a vessel filled with a viscous fluid, and the evolution of the shape of the cloud is tracked. The authors observe that, if the number of injected particles is large enough, this shape is unstable: during the fall, particles leak from the rear of the cloud and form a vertical tail, which causes the emergence of a torus shape, before the cloud breaks into several droplets.

	Denoting by $\Omega\subset \R^3$ the domain occupied by the container, we consider the following transport-Stokes equation with homogeneous Dirichlet boundary condition on $\partial \Omega$:
	\begin{eqnarray}
		- \Delta u +\nabla p =- \rho\,  e_3 \quad \textrm{in }\Omega, \label{Stokes_Omega} \\
		\div u = 0  \quad \textrm{in }\Omega, \label{Incomp_Omega}\\
		\partial_t \rho + \div(\rho u) =0 \quad \textrm{in }\Omega, \label{Transport_Omega}\\
		\rho(t=0)= \rho_0 \quad \textrm{in }\Omega, \label{Init_rho_Omega}\\
		u = 0 \quad \textrm{on } \partial \Omega. \label{lim_dOmega}
	\end{eqnarray}

	\subsubsection{Axisymmetric formulation of the model}

	We use an axisymmetric setting and define $\Omega$ as the cylinder $\Omega=D(0,\widehat R)\times (0, \widehat L)$ where $\widehat L$ is a positive number and $D(0,\widehat R)$ is the disk of radius $\widehat R$ centred at the origin in $\R^2$. 
	%We assume that the initial density $\rho_0$ is independent on the angular variable $\theta$, and set $\rho_0=\rho_0(r,z)$.
	We also take $\rho_0$ as the indicator function of a ball $B(c_0,R)$:
	\[
	%\rho_0 = \frac{1_{B(c_0,R)}}{|B(c_0,R)|}\, ,
	\rho_0 = 1_{B(c_0,R)}\, ,
	\]
	where $c_0=(0,0,z_0)$ is a given point on the $e_3$ axis, with $0<z_0<\widehat L-R$, and $R$ is small with respect to $\widehat R$. We additionally set $\widehat \Omega = (0,\widehat R)\times (0,\widehat L)$.

	We introduce cylindrical coordinates $(r,\theta,z)$ defined by
	\[
	x_1=r\cos\theta,\quad x_2=r\sin\theta,\quad x_3=z.
	\]
	Since the domain $\Omega$ is axisymmetric, and the initial density $\rho_0$ is independent on the angular variable $\theta$, the external force $-\rho_0\, e_3$ in the momentum equation~\eqref{Stokes_Omega}  at time $t=0$ is independent on $\theta$. Hence, by classical results on Stokes equations in an axisymmetric domain (see for instance~\cite{BBD2006}), the vector field $u(t=0)$ is axisymmetric and has zero angular component, and the pressure $p(t=0)$ is independent on $\theta$. Using an explicit in time discretization of the transport equation~\eqref{Transport_Omega}, a recursive argument and letting the timestep go to zero, one can prove that $\rho$ remains independent on $\theta$ at any time. This yields, in turn, that $u$ is axisymmetric and has zero angular component, and that $p$ is independent on $\theta$ at any time.
	
	As a result, the velocity field of the fluid can be expressed at any time as $u = u_r\, e_r  + u_z\, e_z$ with $e_r = (\cos \theta, \sin \theta, 0)$, $e_z = e_3$, $\frac{\partial u_r}{\partial \theta} = \frac{\partial u_r}{\partial \theta} = 0$  and the momentum equation~\eqref{Stokes_Omega} reads in cylindrical coordinates
	\begin{equation}\label{Stokes_axisym}
		\left\{
		\begin{array}{rcll}
			\frac{\partial p}{\partial r}- \mu \left[ \frac{1}{r} \frac{\partial}{\partial r} \left ( r \frac{\partial u_r}{\partial r} \right)-\frac{u_r}{r^2} + \frac{\partial^2 u_r}{\partial z^2}  \right] &=&0& \textrm{in }\widehat \Omega,\\
			\frac{\partial p}{\partial z}- \mu \left [\frac{1}{r} \frac{\partial}{\partial r} \left ( r \frac{\partial u_z}{\partial r} \right) + \frac{\partial^2 u_z}{\partial z^2}  \right] &=&-\rho & \textrm{in }\widehat \Omega.\\
		\end{array}
		\right.
	\end{equation}
	Conditions~\eqref{Incomp_Omega} and~\eqref{Transport_Omega} respectively reduce to
	\begin{equation}\label{Incomp_axisym}
		\frac{1}{r} \frac{\partial}{\partial r} \left ( r u_r \right) + \frac{\partial u_z}{\partial z} = 0\quad \textrm{in }\widehat \Omega
	\end{equation}
	and
	\begin{equation}\label{Transport_axisym}
		\partial_t \rho + u_r \frac{\partial \rho}{\partial r}+  u_z \frac{\partial \rho}{\partial z} = 0 \quad \textrm{in }\widehat \Omega\,.
	\end{equation}

	%\subsubsection{Iterative algorithm}

	\subsubsection{Numerical method}\label{NumericalMethod:freefall}

	In order to approximate the density $\rho$ solution of the axisymmetric transport-Stokes system~\eqref{Stokes_axisym}--\eqref{Transport_axisym}, we rely on a Finite Element discretization of Stokes equation~\eqref{Stokes_axisym}-\eqref{Incomp_axisym} that we implement using the open source FreeFem++ software~\cite{FREEFEM}. The density $\rho$ is discretized in time and updated using an ALE method, with mesh velocity equal to the fluid velocity $u$~\cite{DecoeneMaury}.

	More specifically, let $[0,T]$ be the time interval for the simulation, $N\in \N^*$ and define the timestep $\delta_t = T/N$. Let $\widehat \Omega_0$ be an initial triangulation of the computation domain $\widehat \Omega$. With the ALE method, the mesh is moved at each iteration, hence we note $\widehat\Omega_n$ the mesh at iteration $n$. We set $u_n,p_n,\rho_n$ the approximate values of $u,p,\rho$ at instant $t_n=n\, \delta_t$, which are all FE functions defined on $\widehat\Omega_n$.

	The iterative procedure that we implement can be summarized as follows. Assuming that $\rho_n$ is given on $\widehat \Omega_n$, we define $u_n,p_n$ as the solution of~\eqref{Stokes_axisym}--\eqref{Incomp_axisym} with $\rho=\rho_n$. Then, the mesh $\widehat \Omega_{n+1}$ is obtained by moving each vertex of $\widehat\Omega_n$ by $\delta_t\, u_n$, and $\rho_{n+1}$ is defined on the new mesh $\widehat\Omega_n$ by pushing forward $\rho_n$. This step is performed with FreeFem using the command \texttt{movemesh}. At every instant $t_n$, we use $P_0$ elements for $\rho_n$, which enforces the density to remain a characteristic function throughout the simulation. As regards $u_n$ and $p_n$, we use $P_2$ and $P_1$ elements respectively, a Taylor-Hood approximation that ensures that the BBL inf-sup condition is fulfilled~\cite{Lee2011OnSA}.

	To create the initial mesh $\widehat\Omega_0$, we interpolate the indicator function $\rho_0$ on a very fine reference mesh $\widehat \Omega_{\textrm{ref}}$ (see Figure~\ref{Initial_mesh_tank}), using $P_1$ elements. We obtain a piecewise affine function $\rho_0^{\textrm{int}}$. Then, we build $\widehat\Omega_0$ by adapting $\widehat \Omega_{\textrm{ref}}$ to the metric defined by the Hessian matrix of $\rho_0^{\textrm{int}}$, using the command \texttt{adaptmesh}. The meshes $\widehat \Omega_{\textrm{ref}}$ and $\widehat\Omega_0$ are represented on Fig.~\ref{Initial_mesh_tank}.
	
	Since the mesh is moved at every iteration, in order to prevent the reversing of certain triangles, which would result in the creation of an invalid triangulation, we add a periodic remeshing step in the forementioned iterative procedure. This step is performed analogously to the construction of the initial mesh $\widehat\Omega_0$ described in the previous paragraph: $\rho_n$ is interpolated on the reference mesh, so as to define a piecewise affine function $\rho_n^{\textrm{int}}$ on $\widehat \Omega_{\textrm{ref}}$. The new computational mesh $\widehat \Omega_n$ is then obtained by adapting the reference mesh to the metric induced by the hessian matrix of $\rho_n^{\textrm{int}}$.

	\subsubsection{Results and comments} 
	
	The results of the simulation of the axisymmetric model of a fall of a spherical suspension in a container~\eqref{Stokes_axisym}--\eqref{Transport_axisym} are plotted in Figures~\ref{Fig:freefall} and~\ref{Fig:freefall_zoom}. The parameters are $R=40, L=10, T=200$ and $\delta_t=0.25$. One can observe that the initial spherical shape of the droplet is unstable: during the fall of the suspension, small clusters of particles dissociate from the rest of the droplet, in a region located around the axis of symmetry of the cylindrical domain $\Omega$. This phenomenon results in the appearing of tiny regions of density one, located above the main part of the droplet. As a consequence, the droplet loses matter around its axis of symmetry. 
	
	This behaviour appears consistent with the experimental observations provided in~\cite{MNG}, that reveal the appearance of a tail above the suspension, finally resulting in the formation of a torus shape.

	\begin{figure}
		\bc
		\includegraphics[width=2cm]
		{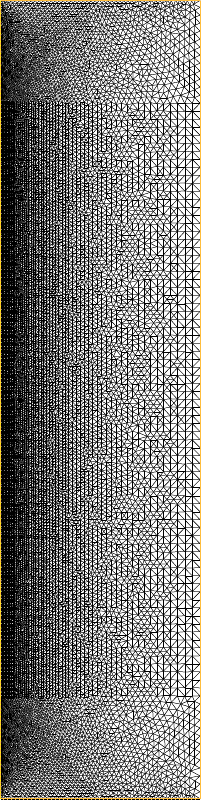} \qquad
		\includegraphics[width=2cm]
		{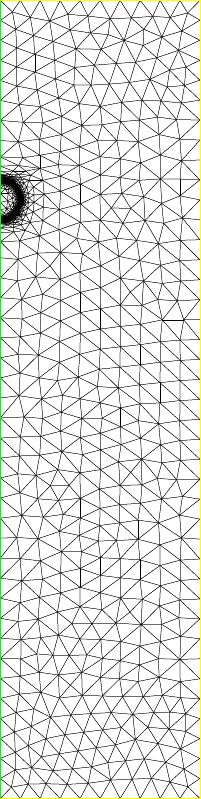}
		\ec
		\caption{Left: reference mesh $\widehat \Omega_{\textrm{ref}}$ of the domain $\widehat \Omega$ used for simulating the fall of a sphere in a container, modelled by the axisymmetric description~\eqref{Stokes_axisym}--\eqref{Transport_axisym}. Right: initial computational mesh $\widehat\Omega_0$.}\label{Initial_mesh_tank}
	\end{figure}

	\begin{figure}
		\bc
		\includegraphics[width=1.cm]
		{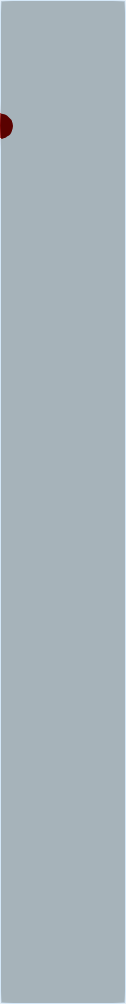} 
		\quad
		\includegraphics[width=1.cm]
		{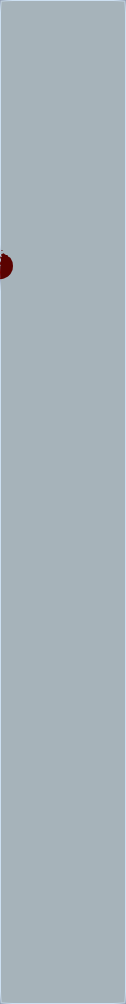}
		\quad
		\includegraphics[width=1.cm]
		{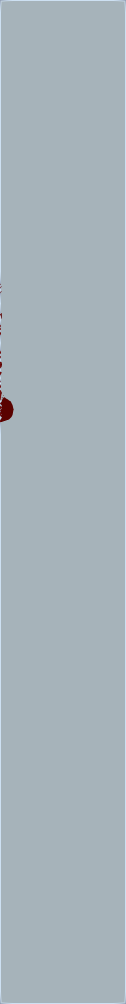}
		\quad
		\includegraphics[width=1.cm]
		{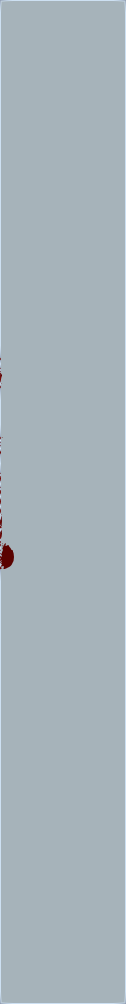} 
		\quad
		\includegraphics[width=1.cm]
		{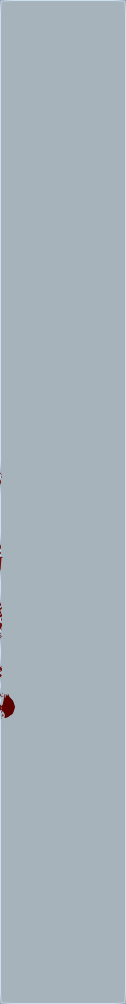} 
		\ec
		\caption{Density $\rho_n$ at times $t=0,50,100,150,200$ in the case of a fall in a cylindrical container. In lightgray, $\rho=0$; in red, $\rho=1$. Screenshots extracted from the video available \href{https://webusers.imj-prg.fr/~amina.mecherbet/tank_fall_video.mp4}{here} .}
		\label{Fig:freefall}
	\end{figure}

	\begin{figure}
		\bc
		% \includegraphics[width=4cm]
		% {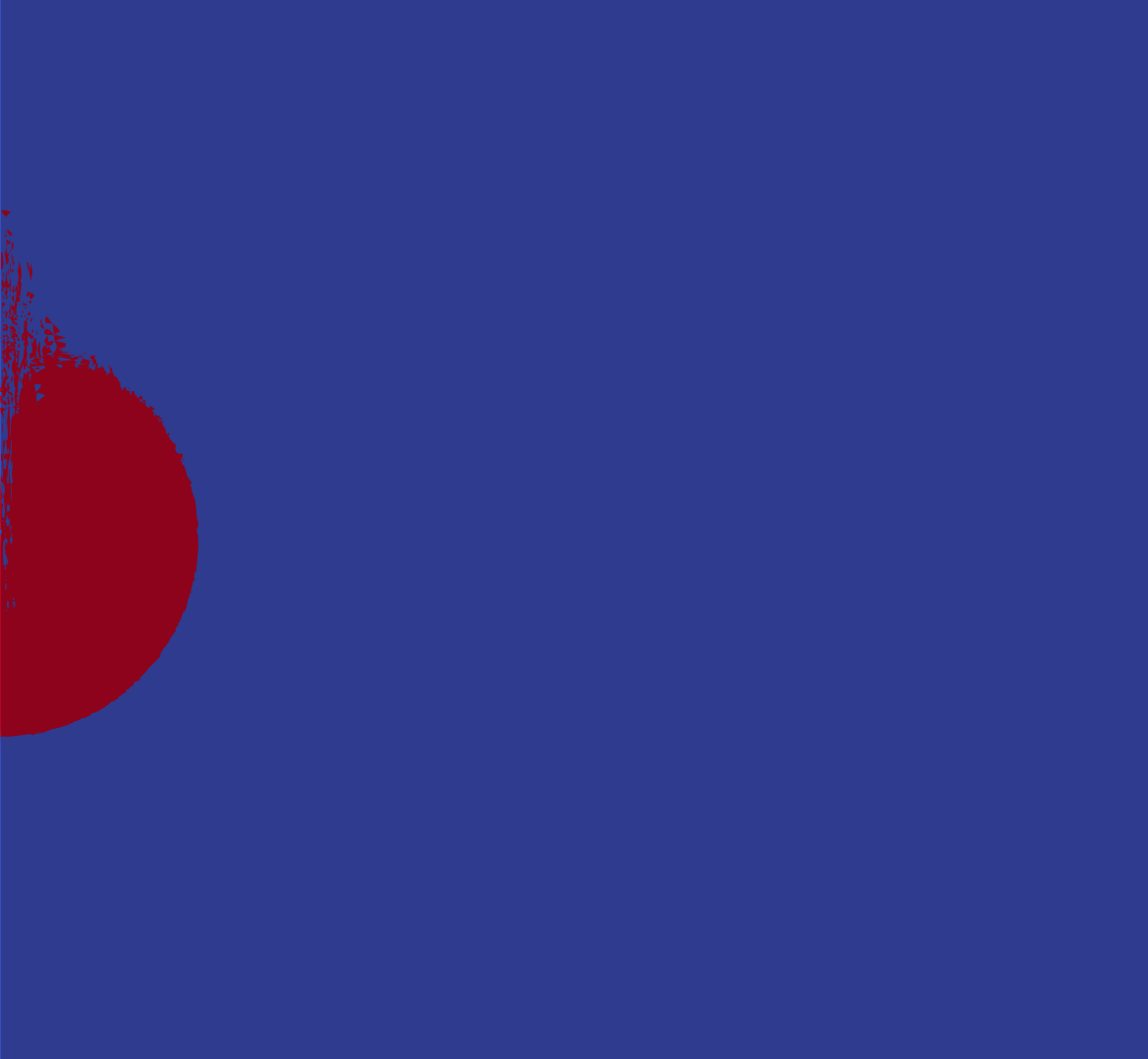} 
		% \quad
		\includegraphics[width=5cm]
		{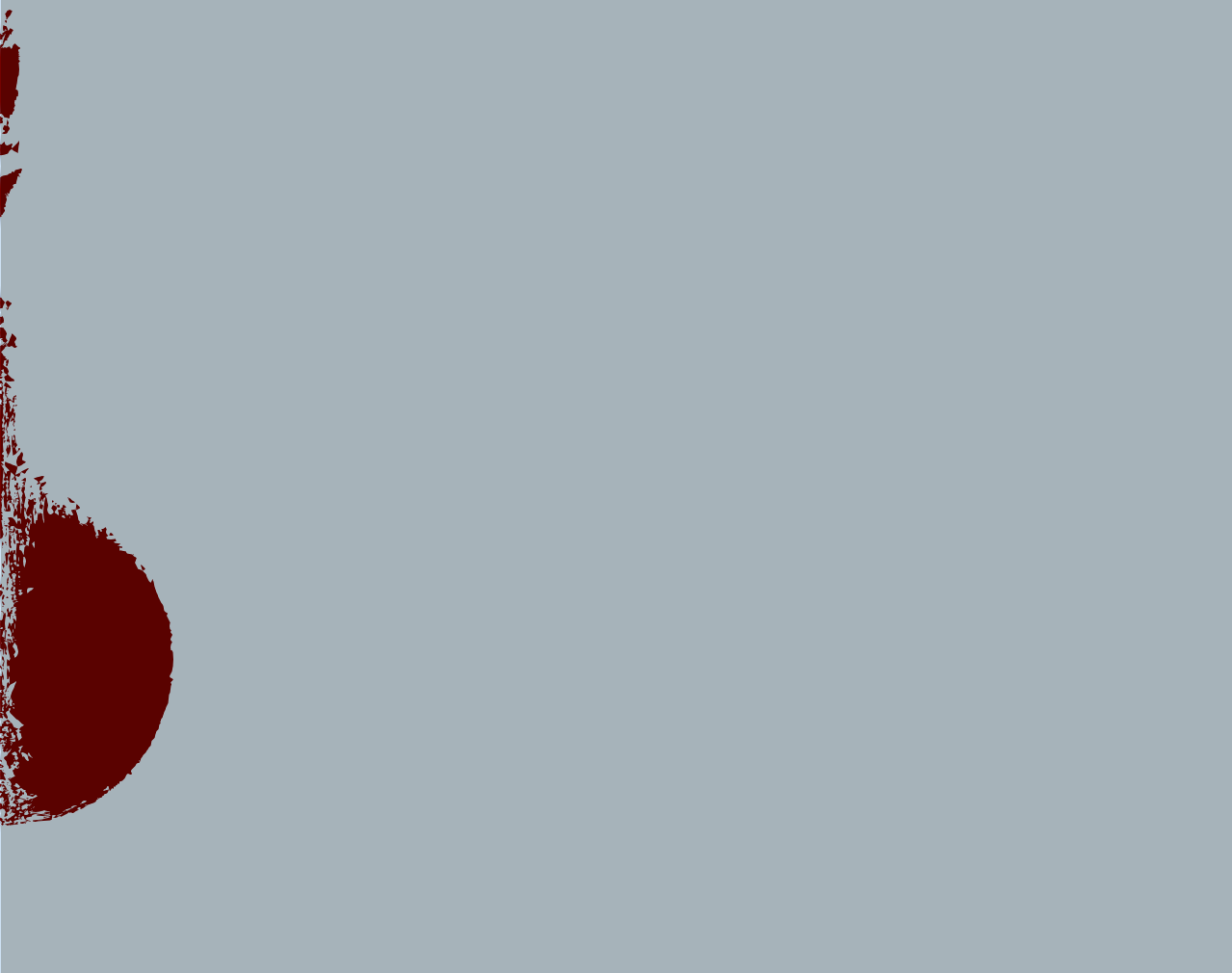} 
		\ec
		\caption{Density $\rho_n$ at time $t_n=100$, plotted after zooming  in the vicinity of the droplet, in the case of a fall in a cylindrical container. In lightgray, $\rho=0$; in red, $\rho=1$. 
			%Screenshot extracted from the video available here, displaying the result of the simulation on time interval $t\in [0,200]$.
		}
		\label{Fig:freefall_zoom}
	\end{figure}

	\subsection{Transport-Stokes equation in the whole space $\R^3$}
	
	In this section, we propose a different approach to test the stability of a spherical suspension of particles, described by a density $\rho$ solution to the transport-Stokes equation~\eqref{StokesR3}--\eqref{limInfini} in $\R^3$. To this aim, we combine a finite element method, similar to the one presented in Section~\ref{Section:freefall}, with the truncated domain-artificial boundary condition method.

	\subsubsection{Truncated domain and artificial boundary condition}

	We now start from a spherical suspension of particles located at the origin, and set 
	\[
	%\rho_0 = \frac{1_{B(c_0,R)}}{|B(c_0,R)|}\, ,
	\rho_0 = 1_{B(0,1)}\, .
	\]
	Let again be $[0,T]$ the time interval for the simulation, $N\in \N^*$ and $\delta_t=T/N$. We set $\rho_n$ the approximation of $\rho(t_n,\cdot)$ with $t_n=n\, \delta_t$, and define $u_n,p_n$ as the solution to the following Stokes problem over $\R^3$:
	\begin{eqnarray}
		-\Delta  u_n+ \nabla  p_n =-  \rho_n\,  e_3 \quad \textrm{in }\R^3, \label{Stokes_space_n} \\
		\div u_n = 0  \quad \textrm{in }\R^3, \label{Incomp_space_n}\\
		\lim_{|x|\ra \infty} u_n(x) = 0. \label{limInfiniSpacen}
	\end{eqnarray}
	Let $M_n$ be the center of mass of $\rho_n$, which is defined by
	\begin{equation}\label{Def:center_of_mass_Mn}
		M_n =  \frac{\int_{\R^3} \rho_n(x)\, x\, dx }{\int_{\R^3} \rho_n(x)\,  dx}.
	\end{equation}
	As recalled in the Introduction, in the case where the initial density $\rho_0$ is a spherical patch, the solution of the Transport-Stokes system~\eqref{StokesR3}--\eqref{limInfini} is a travelling wave. As a result, the position of $M_n$ is expected to move between iteration $n$ and $n+1$, with $M_{n+1}-M_n\approx \delta_t\, c^\star$ (at least for small values of $n$, for which the shape of the droplet remains close to the sphere). 
	
	However, the very specific behaviour of a spherical patch moved by the Transport-Stokes equation is a consequence of the symmetries of the problem. In order to preserve this symmetry during the iterative process, we introduce a truncated domain 
	\[
	\Omega^*=B(0, R^*),
	\]
	where $R^*\gg 1$ is a large radius with respect to the initial radius of the patch, and we translate the spatial coordinates so that $M_n$ coincides with the origin of $\R^3$. Then, we define $(u_n,p_n)$ as the solution to the following problem:
	\begin{eqnarray}
		-\Delta  u_n+ \nabla  p_n =-  \rho_n\,  e_3 \quad \textrm{in }\Omega^*, \label{Stokes_trunc} \\
		\div u_n = 0  \quad \textrm{in }\Omega^*, \label{Incomp_trunc}\\
		\frac{\partial u_n}{\partial \nu} -  p_n \nu +\frac{1}{R^*} u_n = 0\quad \textrm{on }\partial \Omega^*. \label{ArtificialBC}
	\end{eqnarray}
	Equation~\eqref{ArtificialBC} constitutes the artificial boundary condition, whose introduction allows one to derive error estimates of order $1/(R^*)^{3/2}$ on the solution $(u_n,p_n)$ to Stokes equation, in the case where the source term is compactly supported in the truncation domain $\Omega^*$. For more precise statements and details on this method, we refer to~\cite[Section 4]{GuirguisGunzburger87}.

	\subsubsection{Numerical method}

	To solve system~\eqref{Stokes_trunc}--\eqref{ArtificialBC}, we use a similar approach to the one described in Section~\eqref{NumericalMethod:freefall}, based on an axisymmetric formulation and a finite element method. We have represented in Figure~\ref{Fig:nonborne_refmesh} the reference mesh and the initial computational mesh, adapted to $\rho_0$. The major differences with the method from Section~\eqref{NumericalMethod:freefall} are the following.
	\begin{itemize}
		\item Since the vector field $u_n$ does not vanish on the boundary of the domain, we introduce a nonnegative truncature function $\zeta$ that vanishes near this boundary and is equal to $1$ in the semi-disk of radius $R^*-2$. At iteration $n$, the mesh vertices are then displaced from $\delta_t\, \zeta u_n$ instead of $\delta_t\, u_n$.
		\item After this first step of mesh updating, the density $\rho_n$ is pushed forward, so as to define a new density $\rho_{n+1}$ on the current mesh. Since its center of mass is not located at the origin, we compute numerically the position of $M_{n+1}$ by formula~\eqref{Def:center_of_mass_Mn} applied to $\rho_{n+1}$, and move the mesh once again by the displacement field $\zeta\, v_n$, where $v_n=-OM_{n+1}$ (interpreted as a vector field in $\R^2$). Finally, $\rho_{n+1}$ is pushed forward on the new mesh.
	\end{itemize}

	\begin{figure}
		\bc
		\includegraphics[width=6cm]
		{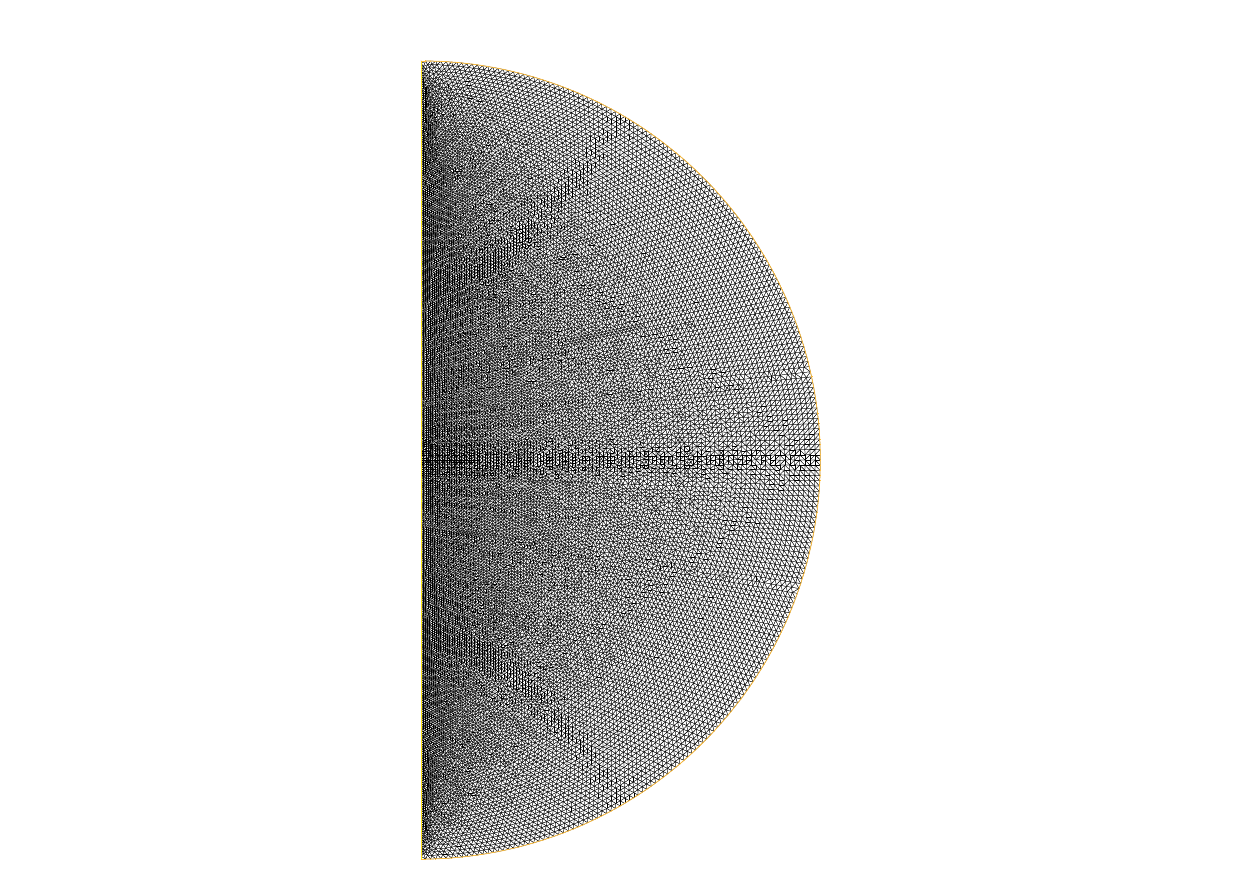} \quad
		\includegraphics[width=6cm]
		{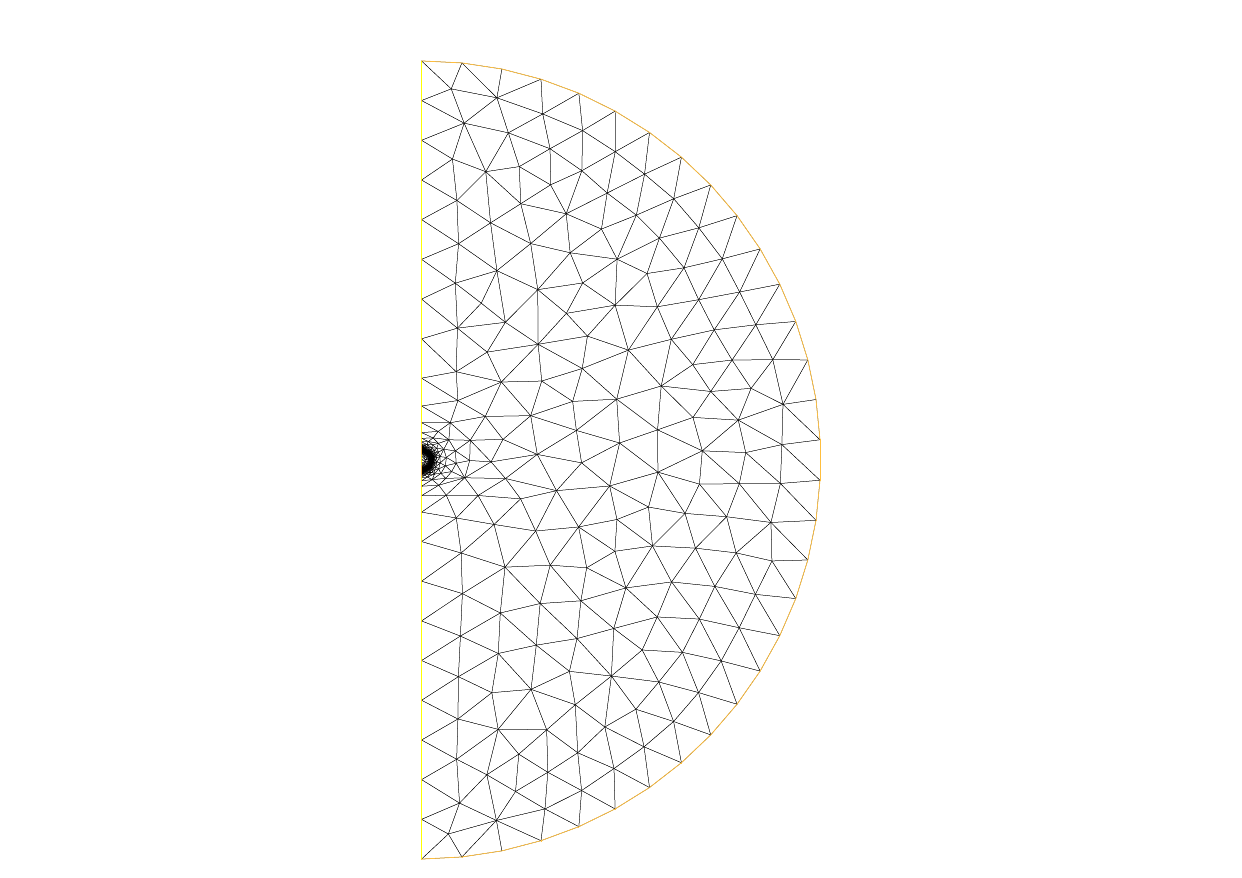}
		\ec
		\caption{Left: reference mesh used for simulating the Transport-Stokes equation in $\R^3$, in an axisymmetric setting, using artificial boundary conditions on the boundary of the truncated domain. Right: initial computational mesh, adapted to $\rho_0$.}\label{Initial_mesh_unbounded}
		\label{Fig:nonborne_refmesh}
	\end{figure}

	\subsubsection{Results and comments}

	We have plotted in Figure~\ref{Fig:res_nonborne} an example of simulation of the Transport-Stokes equation in $\R^3$, using an axisymmetric model and the numerical method presented above, with $R^*=40$ and $\delta_t = 0.025$. We observe an analogous behaviour as in the case of the fall of a droplet in a cylindrical tank: starting from a spherical patch, small clusters of density $1$ are released above the droplet, resulting in a loss of matter near its axis of symmetry, eventually conducting to the formation of a shape reminiscent of a torus. We emphasize that one can observe, at the level of the video, the departure from the closed Hadamard–Rybczynski toroidal circulation which are reminiscent of the observations described in \cite{MNG}.

	\begin{figure}
	\bc
	\includegraphics[width=4.cm]
	{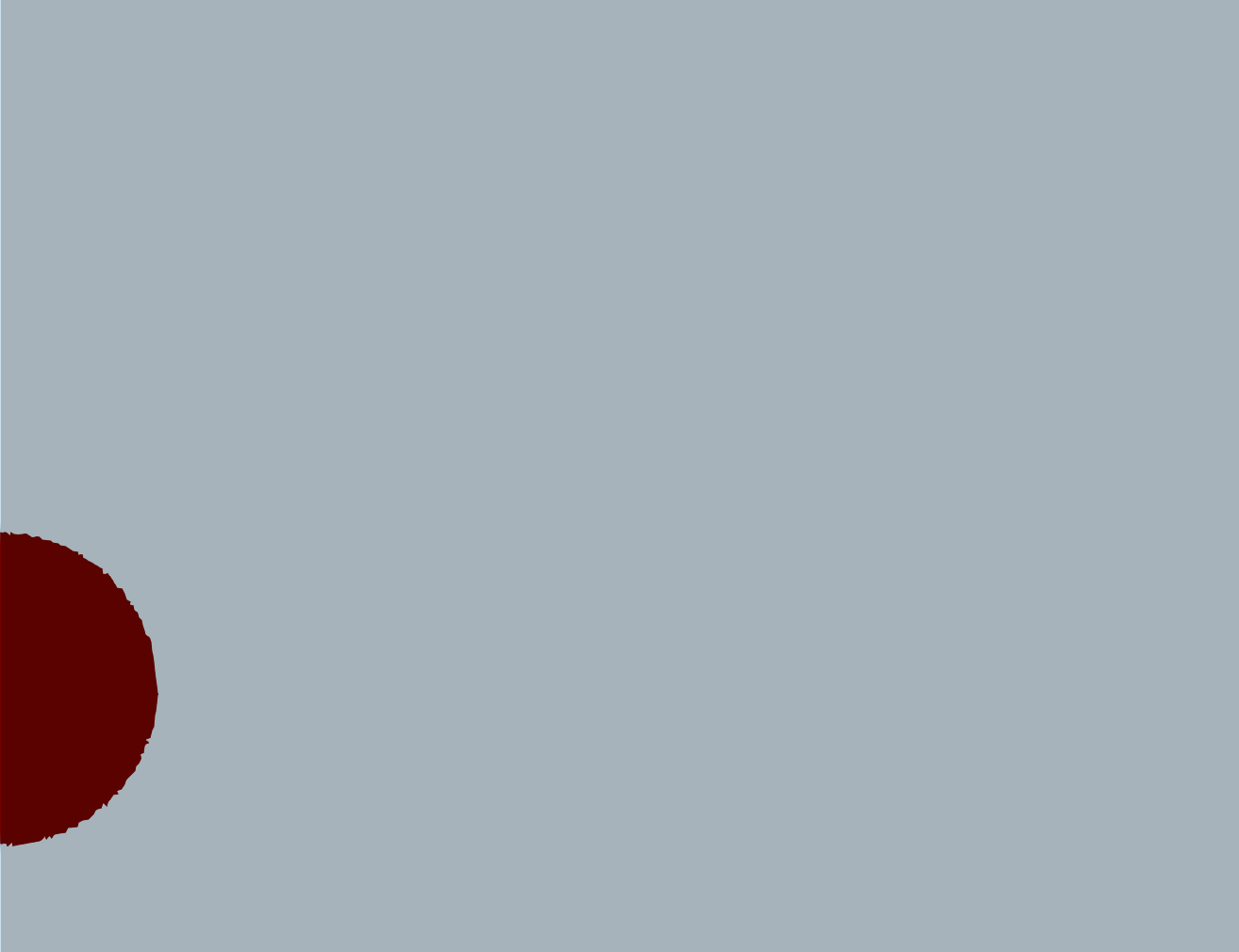} 
	\quad
	\includegraphics[width=4.cm]
	{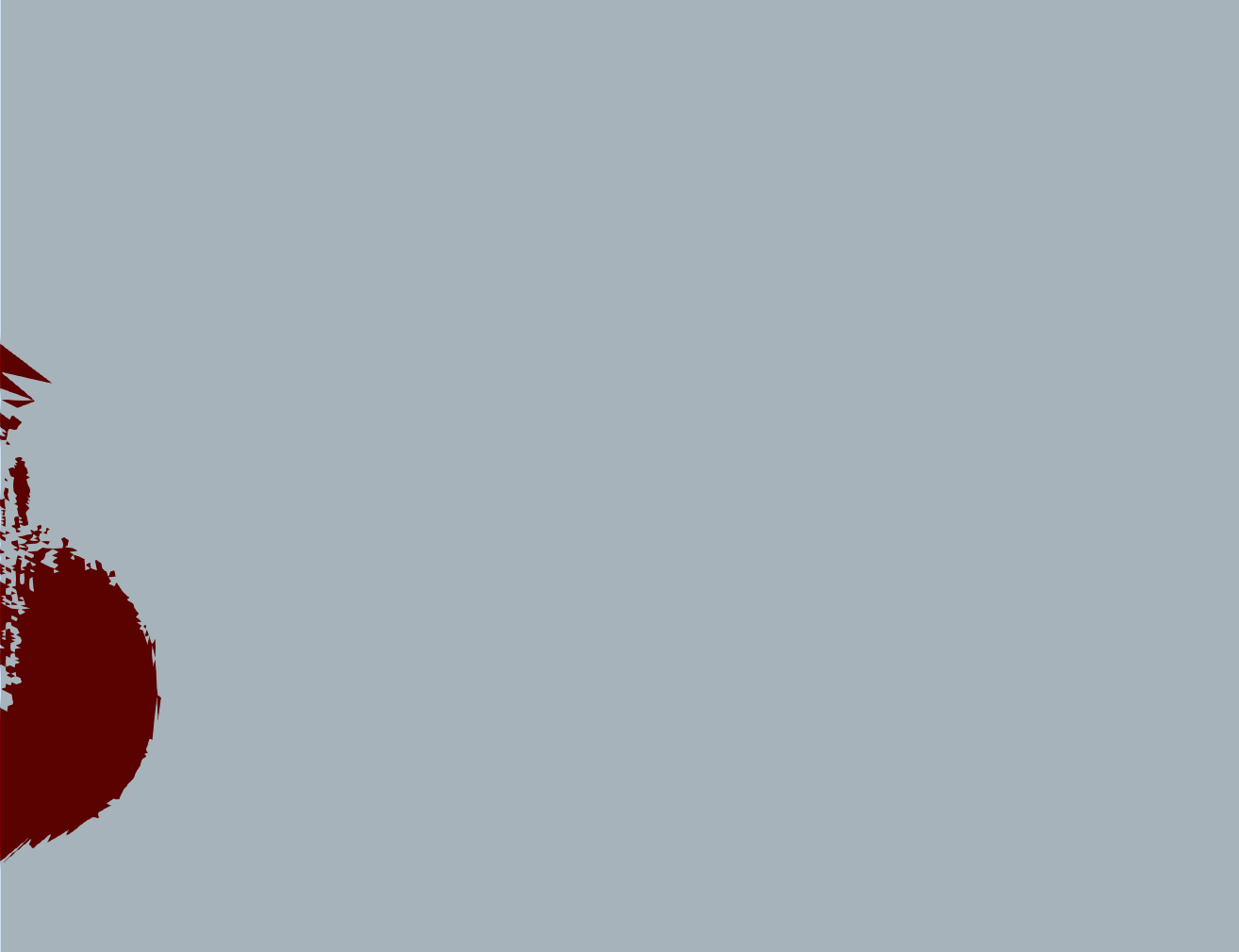}\\
	\includegraphics[width=4.cm]
	{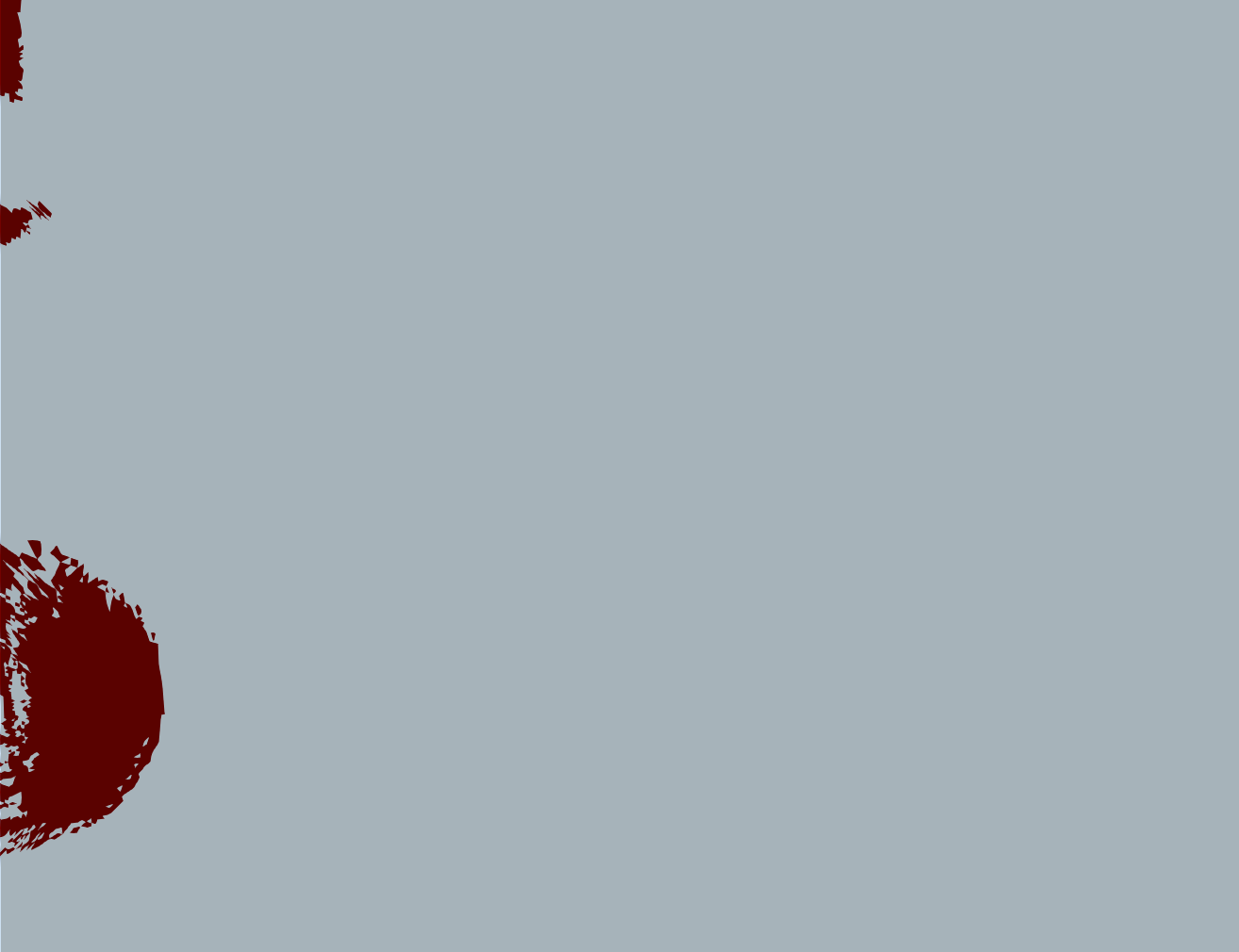}
	\quad
	\includegraphics[width=4.cm]
	{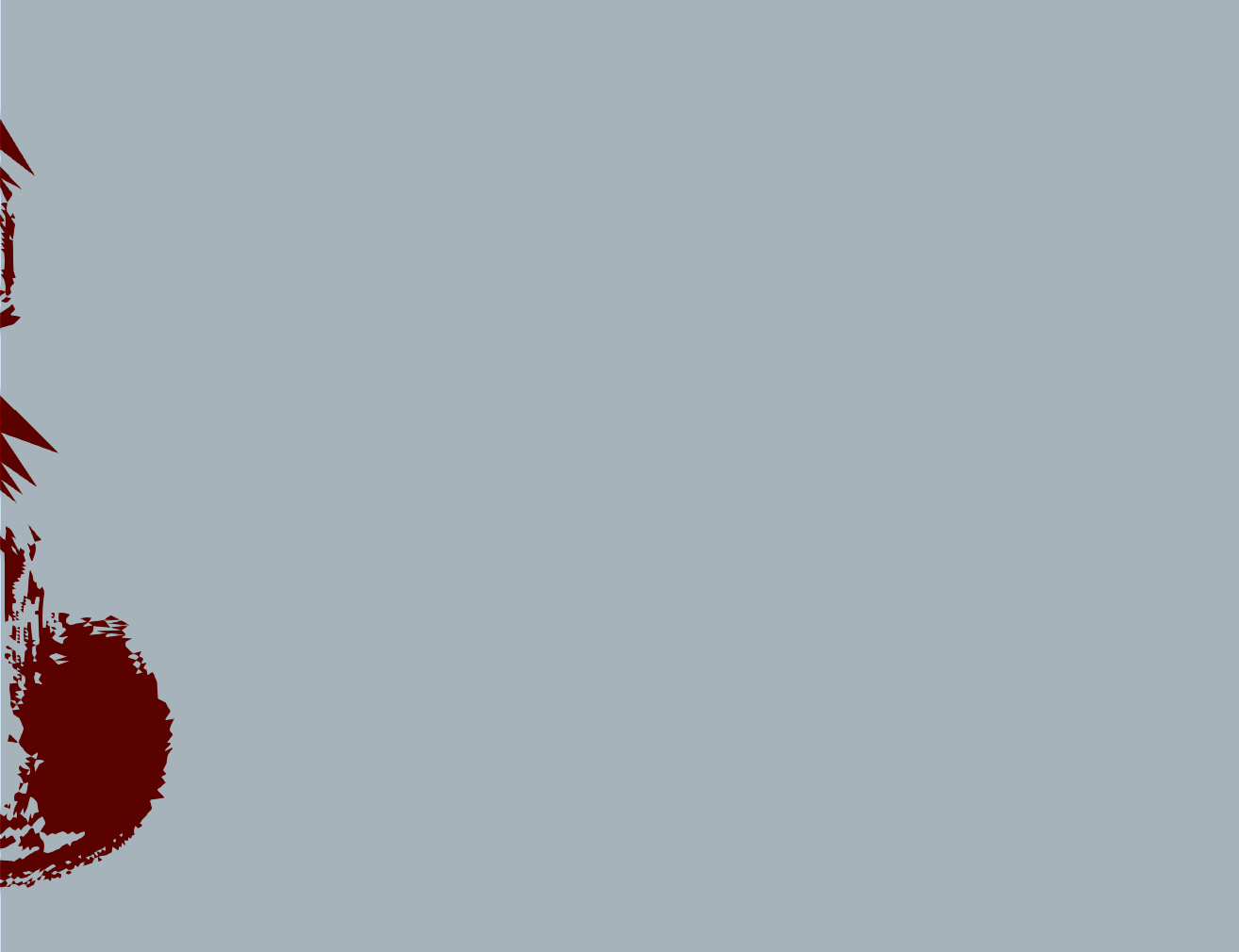} 
	\ec
	\caption{Density $\rho_n$ computed at times $t_n=0,50,100,150$ (from left to right and top to bottom), around the origin, using artificial boundary conditions on a truncated domain and an axisymmetric setting. In lightgray, $\rho=0$; in red, $\rho=1$. Screenshots extracted from the video available \href{https://webusers.imj-prg.fr/~amina.mecherbet/unbounded_domain_video_2}{here}.}
	\label{Fig:res_nonborne}
\end{figure}

	\appendix
	\section{Technical lemma}
	\begin{Lemma}\label{lemme_int_S2}
		
		For any $0 \leq \alpha<2$, there exists a positive constant $C>0$ satisfying
		$$
		\underset{ \theta \in[0,\pi]}{\sup} \bigg( \int_{[0,\pi]\times[0,2\pi]} \frac{ \sin (\bar \theta) }{\sqrt{2-2\sin(\theta) \sin(\bar \theta) \cos(\phi) - 2 \cos(\theta) \cos(\bar \theta) }^\alpha} d \bar \theta d  \phi \bigg) \leq C.
		$$
		
		%$$
		%\underset{ \theta \in[0,\pi]}{\sup} \left( \int_{[0,\pi]\times[0,2\pi]} \frac{ \sin (\bar \theta) }{|e(\bar \theta, \phi)-e( \theta,0) |^\alpha} d \bar \theta d  \phi+\int_{[0,\pi]\times[0,2\pi]} \frac{\sin(\bar \theta) d\bar  \theta d  \phi}{\sqrt{1-e( \theta,0)\cdot e(\bar \theta, \phi)^2}^\alpha} \right)  \leq C.
		%$$
	\end{Lemma}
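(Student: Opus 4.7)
The plan is to recognize the quantity under the square root as a squared Euclidean distance on the unit sphere $\mathbb{S}^2\subset\R^3$, then exploit rotational invariance of the surface measure to reduce the supremum over $\theta$ to a single one-dimensional integral that is elementary to bound.

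Concretely, I would set
\[
x(\theta)=\begin{pmatrix}\sin\theta\\ 0\\ \cos\theta\end{pmatrix},\qquad y(\bar\theta,\phi)=\begin{pmatrix}\sin\bar\theta\cos\phi\\ \sin\bar\theta\sin\phi\\ \cos\bar\theta\end{pmatrix},
\]
so that $x,y\in\mathbb{S}^2$ and
\[
|x-y|^2 = 2-2\sin\theta\sin\bar\theta\cos\phi-2\cos\theta\cos\bar\theta.
\]
Since $\sin\bar\theta\, d\bar\theta\, d\phi$ is precisely the surface element $d\sigma(y)$ on $\mathbb{S}^2$, the integral in question rewrites as
\[
I(\theta):=\int_{\mathbb{S}^2}\frac{d\sigma(y)}{|x(\theta)-y|^\alpha}.
\]
Because the surface measure on $\mathbb{S}^2$ is invariant under rotations, applying a rotation $R\in SO(3)$ that sends $x(\theta)$ to $e_3$ yields $I(\theta)=I(0)$ for every $\theta\in[0,\pi]$; so it is enough to bound $I(0)$.

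For $x=e_3$, the integrand depends only on $\bar\theta$ since $|e_3-y|^2 = 2-2\cos\bar\theta = 4\sin^2(\bar\theta/2)$. Hence
\[
I(0)=\int_0^{2\pi}\!\!\int_0^\pi \frac{\sin\bar\theta\, d\bar\theta\, d\phi}{\bigl(4\sin^2(\bar\theta/2)\bigr)^{\alpha/2}}
= \frac{2\pi}{2^{\alpha-2}}\int_0^{\pi/2}\sin^{1-\alpha}(u)\cos(u)\, du,
\]
after the substitution $u=\bar\theta/2$ and using $\sin\bar\theta=2\sin(\bar\theta/2)\cos(\bar\theta/2)$. The last integral is finite as soon as $1-\alpha>-1$, i.e.\ $\alpha<2$, in which case it equals $\frac{1}{2-\alpha}$, giving an explicit constant $C=C(\alpha)$ that stays bounded on any compact subinterval of $[0,2)$.

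There is no real obstacle in this argument: the only point to watch is the integrability near $\bar\theta=0$, which is exactly why the hypothesis $\alpha<2$ is needed and is sharp. The rotational-invariance step is what makes the supremum over $\theta$ trivial and avoids any case analysis.
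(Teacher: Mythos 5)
Your proof is correct and takes essentially the same route as the paper: both rewrite the integrand as a power of the Euclidean chord distance on $\mathbb{S}^2$, use rotational invariance of the surface measure to reduce the supremum in $\theta$ to the single case $\theta=0$, and then bound the resulting one-dimensional integral, which is finite precisely when $\alpha<2$. Your version is just a bit more explicit about the change of variable $u=\bar\theta/2$ and the resulting constant, whereas the paper delegates part of the argument to a prior reference.
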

	\begin{proof}
		Denoting by $e(\theta,\phi):=\begin{pmatrix}
			\sin (\theta) \cos(\phi) \\ \sin(\theta) \sin(\phi) \\ \cos(\theta)
		\end{pmatrix}$, the integral can be rewritten as
		$$
		\int_{[0,\pi]\times[0,2\pi]} \frac{ \sin (\bar \theta) }{|e(\bar \theta, \phi)-e( \theta,0) |^\alpha} d \bar \theta d  \phi
		$$
		and the proof is analogous to the one of \cite[Lemma B.1]{Mecherbet2020}.
		After using a change of variable in order to consider only the case $\theta=0$, we get 
		$$
		\int_{[0,\pi]\times[0,2\pi]} \frac{ \sin (\bar \theta) }{|e(\bar \theta, \phi)-e( 0,0) |^\alpha} d \bar \theta d  \phi=2\pi \int_0^\pi \frac{\sin(\bar \theta)}{\sqrt{2}^\alpha\sqrt{1-\cos(\bar \theta)}^\alpha} d \bar \theta
		$$
		which is integrable if $\alpha<2$.
	\end{proof}

	\section{Proof of Proposition~\ref{prop_carac_fonction_propre_A}}\label{Appendix:proofPropA}
	In order to lighten the proof let us assume again that $\lambda \in \R$, the proof works analogously for $\lambda \in \C$.
	Let $h\in L^p(0,\pi)$ be an eigenvector associated to
	$\lambda$, in the following weak sense: for any $\psi\in C^1_c(0,\pi)$,
	\begin{align}
		&-\frac{1}{15}\int_0^\pi \left(\cos (u)h(u)\psi(u)+ \sin (u)h(u)\psi'(u)\right)\, du + \int_0^\pi Lh(u)\psi(u)\, du\notag \\
		&= \lambda\int_0^\pi h(u)\psi(u)\, du\, .\label{WeakFormEigenvectorA}
	\end{align}
	The first step is to show that for a.e.\! $\theta\in (0,\pi)$ and $t<0$, 
	\begin{equation}\label{GoalProofEigenvalue}
		h(\Theta(t,0,\theta))e^{\lambda t} - h(\theta) = \int_0^t Lh(\Theta(\tau,0,\theta)) e^{\lambda \tau }\, d\tau\, .
	\end{equation}
	
	Let us take $\psi\in C^1_c(0,\pi)$ and fix $t<0$. We have using the change of variable $z=\Theta(\tau,0,\theta)$ with $\theta=\Theta(-\tau,0,z)$ and $d \theta= \partial_z \Theta(-\tau,0,z)\, dz$:
	\begin{align*}
		& \int_0^\pi \psi(\theta) \int_0^t Lh(\Theta(\tau,0,\theta)) e^{\lambda \tau }\, d\tau d \theta \\
		=& \int_0^\pi Lh(z)\int_0^t e^{\lambda \tau} \psi(\Theta(-\tau,0,z))\partial_z \Theta(-\tau,0,z)\,  d\tau dz\, .
	\end{align*}
	Setting $$f(z)=\int_0^t e^{\lambda \tau} \psi(\Theta(-\tau,0,z))\partial_z \Theta(-\tau,0,z) d\tau\, , $$ we see that $f$ is in $C^1_c(0,\pi)$ and satisfies
	\begin{align*}
		&\int_0^\pi Lh(z) f(z) dz \\
		= & \int_0^\pi h(z) \Big[ \frac{1}{15}\cos(z) f(z)+ \frac{1}{15} \sin(z) f'(z)\Big]\,  dz+\lambda \int_0^\pi h(z) f(z)\,  dz
	\end{align*}
	with 
	\begin{align*}
		\frac{1}{15} \sin(z) f'(z)  =  &  \frac{1}{15} \sin(z) \left(\int_0^t e^{\lambda \tau} \psi'(\Theta(-\tau,0,z))(\partial_z \Theta(-\tau,0,z))^2 d\tau \right)\\
		& +\frac{1}{15} \sin(z)\int_0^t e^{\lambda \tau} \psi(\Theta(-\tau,0,z))\partial^2_z \Theta(-\tau,0,z) d\tau\, .
	\end{align*}
	However, since $\frac{1}{15} \sin(z)\partial_z \Theta(-\tau,0,z)=\partial_\tau \Theta(-\tau,0,z) $, integrating by parts yields
	\begin{align*}
		\frac{1}{15} \sin(z) f'(z)= &  \int_0^t e^{\lambda \tau} \left[\partial_\tau \tau \mapsto \psi(\Theta(-\tau,0,z))\right]\partial_z \Theta(-\tau,0,z) d\tau \\
		& +\frac{1}{15} \sin(z)\int_0^t e^{\lambda \tau} \psi(\Theta(-\tau,0,z))\partial^2_z \Theta(-\tau,0,z) d\tau\\
		= &-\lambda  \int_0^t e^{\lambda \tau}  \psi(\Theta(-\tau,0,z))\partial_z \Theta(-\tau,0,z) d\tau\\
		&- \int_0^t e^{\lambda \tau}   \psi(\Theta(-\tau,0,z))\partial_\tau \partial_z \Theta(-\tau,0,z) d\tau\\
		&+e^{\lambda t}  \psi(\Theta(-t,0,z))\partial_z \Theta(-t,0,z)-\psi(z)\\
		&+\frac{1}{15} \sin(z)\int_0^t e^{\lambda \tau} \psi(\Theta(-\tau,0,z))\partial^2_z \Theta(-\tau,0,z) d\tau\\
		= &-\lambda f(z) - \int_0^t e^{\lambda \tau}   \psi(\Theta(-\tau,0,z))\partial_\tau \partial_z \Theta(-\tau,0,z) d\tau\\
		&+e^{\lambda t}  \psi(\Theta(-t,0,z))\partial_z \Theta(-t,0,z)-\psi(z)\\
		&+\frac{1}{15} \sin(z)\int_0^t e^{\lambda \tau} \psi(\Theta(-\tau,0,z))\partial^2_z \Theta(-\tau,0,z) d\tau\, .
	\end{align*}
	As a result,
	\begin{align*}
		&\frac{1}{15}\cos(z) f(z)+ \frac{1}{15} \sin(z) f'(z)\\
		& = -\lambda f(z) +e^{\lambda t}  \psi(\Theta(-t,0,z))\partial_z \Theta(-t,0,z)-\psi(z)\\
		&\quad +\int_0^t e^{\lambda \tau}   \psi(\Theta(-\tau,0,z))\\
		& \quad \times\left(\frac{1}{15}\big[\cos(z)\partial_z \Theta(-\tau,0,z)+\sin(z)\partial^2_z \Theta(-\tau,0,z)\big] - \partial_\tau \partial_z \Theta(-\tau,0,z)\right)  d\tau\, .
	\end{align*}
	Last line vanishes by deriving with respect to $z$ the relation 
	$$\frac{1}{15} \sin(z)\partial_z \Theta(-\tau,0,z)=\partial_\tau \Theta(-\tau,0,z) $$ so that the previous equality reduces to
	\begin{multline*}
	\frac{1}{15}\cos(z) f(z)+ \frac{1}{15} \sin(z) f'(z)= \\ -\lambda f(z) +e^{\lambda t}  \psi(\Theta(-t,0,z))\partial_z \Theta(-t,0,z)-\psi(z)\, .
	\end{multline*}
	
	Gathering the previous computations, we see that
	\begin{align*}
		& \int_0^\pi \psi(\theta) \int_0^t Lh(\Theta(\tau,0,\theta)) e^{\lambda \tau }\, d\tau d \theta \\
		& = \int_0^\pi Lh(z)f(z) d z\\
		&= \int_0^\pi  h(z)\left( -\lambda f(z) +e^{\lambda t}  \psi(\Theta(-t,0,z))\partial_z \Theta(-t,0,z)-\psi(z)\right) dz\\
		&\quad + \lambda \int h(z) f(z)dz\\
		&=\int_0^\pi h(\Theta(t,0,\theta))e^{\lambda t} \psi(\theta) d \theta - \int_0^\pi h(z)\psi(z) dz \, .
	\end{align*}
	This proves~\eqref{GoalProofEigenvalue}, which by definition
	~\eqref{Def:F}, can be rewritten
	\begin{equation}\label{AlmostDonePointFixe}
		h(\theta)-h(\Theta(t,0,\theta)) = F(t,\theta)\, .
	\end{equation}
	By the proof of item~\ref{item1_eq_vp}, $F(t,\cdot)$ converges to $T^{\lambda}[h]$ in $L^p(0,\pi)$. Also, using once again the change variable $z=\Theta(t,0,\theta)$, there holds the upper estimate
	\begin{align*}
		\int_0^\pi |h(\Theta(t,0,\theta))|^p e^{\lambda p t}\, d\theta & \leq \int_0^\pi |h(z)|^p e^{(\lambda p-\frac{1}{15})t}\, d\theta \\
		& \leq e^{(\lambda p-\frac{1}{15})t} \|h\|_{p}^p,
	\end{align*}
	which implies that $h(\Theta(t,0,\cdot))$ converges to $0$ in $L^p(0,\pi)$ as $t\rightarrow -\infty$, since $\lambda p-\frac{1}{15}>0$.
	Hence, we can pass to the limit in $L^p$ in relation~\eqref{AlmostDonePointFixe} to obtain~\eqref{FixedPointTlambda}
	.
	\medskip
	
	It remains to prove that any function $h\in L^p(0,\pi)$ satisfying~\eqref{FixedPointTlambda} is indeed an eigenvalue of operator $A$. Take $\psi\in C^1_c(0,\pi)$, we want to show that relation~\eqref{WeakFormEigenvectorA} holds true. This results from the following series of computations. Introducing the new variable $z=\Theta(\tau,0,\theta)$, which yields $\theta=\Theta(0,\tau,z)=\Theta(-\tau,0,z)$ with $d\theta= \partial_z \Theta(-\tau,0,z)\, dz$, we get
	\begin{align*}
		& -\int_0^\pi h(\theta) [ \frac{1}{15}\cos(\theta) \psi(\theta)+ \frac{1}{15} \sin(\theta) \psi'(\theta)] d \theta \\
		=&
		-\int_0^\pi \int_{-\infty}^0 Lh(\Theta(\tau,0,\theta))e^{\lambda \tau}  [ \frac{1}{15}\cos(\theta) \psi(\theta)+ \frac{1}{15} \sin(\theta) \psi'(\theta)] d \theta d \tau\\
		=& - \int_{-\infty}^0\int_0^\pi Lh(z)e^{\lambda \tau}  \Big[ \frac{1}{15}\cos(\Theta(-\tau,0,z)) \psi(\Theta(-\tau,0,z))\\
		& +\frac{1}{15} \sin(\Theta(-\tau,0,z)) \psi'(\Theta(-\tau,0,z))\Big] 
		\partial_z \Theta(-\tau,0,z)d z d \tau\, .
	\end{align*}
	Using the relation $\partial_\tau \Theta(-\tau,0,z) =\frac{1}{15} \sin(\Theta(-\tau,0,z))$ and integration by parts, last relation reduces to
	\begin{align*}
		& -\int_0^\pi h(\theta) [ \frac{1}{15}\cos(\theta) \psi(\theta)+ \frac{1}{15} \sin(\theta) \psi'(\theta)] d \theta\\
		=&- \int_{-\infty}^0\int_0^\pi Lh(z)e^{\lambda \tau}  \partial_z\left(  z \mapsto \frac{1}{15}\sin(\Theta(-\tau,0,z))\right) \psi(\Theta(-\tau,0,z))\\
		& - \int_{-\infty}^0\int_0^\pi Lh(z)e^{\lambda \tau}  \partial_\tau\left( \tau \mapsto \psi(\Theta(-\tau,0,z))\right) 
		\partial_z \Theta(-\tau,0,z)d z d \tau\\ 
		=&- \int_{-\infty}^0\int_0^\pi Lh(z)e^{\lambda \tau}  \partial_z \partial_\tau \Theta(-\tau,0,z)\psi(\Theta(-\tau,0,z))\\
		&+\lambda \int_{-\infty}^0\int_0^\pi Lh(z)e^{\lambda \tau}  \psi(\Theta(-\tau,0,z))
		\partial_z \Theta(-\tau,0,z)d z d \tau\\ 
		&+ \int_{-\infty}^0\int_0^\pi Lh(z)e^{\lambda \tau}  \psi(\Theta(-\tau,0,z))
		\partial_\tau \partial_z \Theta(-\tau,0,z)d z d \tau\\ 
		&-\int_0^\pi Lh(z) \psi(z) dz\\
		=&\lambda \int_{-\infty}^0\int_0^\pi Lh(z)e^{\lambda \tau}  \psi(\Theta(-\tau,0,z))
		\partial_z \Theta(-\tau,0,z)d z d \tau\\ 
		&-\int_0^\pi Lh(z) \psi(z) dz\\
		=&\lambda \int_0^\pi \left( \int_{-\infty}^0 Lh(\Theta(\tau,0,\theta)) e^{\lambda \tau}d \tau \right)\psi(\theta) d \theta - \int_0^\pi Lh(z) \psi(z) d z \\
		=&\lambda \int_0^\pi h(\theta) \psi(\theta) d \theta - \int_0^\pi Lh(z) \psi(z) d z \, .
	\end{align*}

	\bibliographystyle{plain}

	%\bibliography{bibTranStokes}

\begin{thebibliography}{10}

\bibitem{BBD2006}
Z.~Belhachmi, C.~Bernardi, and S.~Deparis.
\newblock {Weighted Cl\'ement operator and application to the finite element
  discretization of the axisymmetric Stokes problem}.
\newblock {\em {Numer. Math.}}, 105:217--247, 2006.

\bibitem{Cobb}
D.~Cobb.
\newblock {On the well-posedness of a fractional Stokes-transport system}.
\newblock {\em Preprint. ArXiv:2301.10511}, 2022.

\bibitem{DGL}
A-L. Dalibard, J.~Guillod, and A.~Leblond.
\newblock {Long-time behavior of the Stokes-transport system in a channel}.
\newblock {\em Preprint. ArXiv:2306.00780}, 2023.

\bibitem{DecoeneMaury}
A.~Decoene and B.~Maury.
\newblock {Moving meshes with Freefem++}.
\newblock {\em Journal of Numerical Mathematics}, 20(3-4):195--214, 2012.

\bibitem{EMG}
M.~L. Ekiel-Je\'zewska, B.~Metzger, and É. Guazzelli.
\newblock Spherical cloud of point particles falling in a viscous fluid.
\newblock {\em Physics of fluids}, 18:[283,301], 2006.

\bibitem{Gancedo&Granero-Belinchon}
F.~Gancedo, R.~Granero-Belinch\`on, and E.~Salguero.
\newblock {Long time interface dynamics for gravity Stokes flow}.
\newblock {\em Preprint. Arxiv: 2211.03437}, 2023.

\bibitem{GrayerII}
H.~Grayer~II.
\newblock Dynamics of density patches in infinite {P}randtl number convection.
\newblock {\em Arch. Ration. Mech. Anal.}, 247(4):Paper No. 69, 29, 2023.

\bibitem{GuirguisGunzburger87}
G.~H. Guirguis and M.~D. Gunzburger.
\newblock On the approximation of the exterior {Stokes} problem in three
  dimensions.
\newblock {\em M2AN - Mod\'elisation math\'ematique et analyse num\'erique},
  21(3):445--464, 1987.

\bibitem{Hadamard}
M.~J. Hadamard.
\newblock Mouvement permanent lent d'une sphere liquide et visqueuse dans un
  liquide visqueux.
\newblock {\em C. R. Acad. Sci.}, 152:1735--1738, 1911.

\bibitem{FREEFEM}
F.~Hecht.
\newblock {New Development in FreeFem++}.
\newblock {\em J. Numer. Math.}, 20:251--265, 2012.

\bibitem{Hofer18}
R.~M. H\"ofer.
\newblock {Sedimentation of inertialess particles in Stokes flows}.
\newblock {\em Commun. Math. Phys.}, 360:55--101, 2018.

\bibitem{Hofer&Schubert}
R.~M. H\"ofer and R.~Schubert.
\newblock {The influence of Einstein's effective viscosity on sedimentation at
  very small particle volume fraction}.
\newblock {\em In Annales de l'Institut Henri Poincar\'e C, Analyse non
  lin\'eaire}, 38(6):1897--1927, 2021.

\bibitem{Inversi}
Marco Inversi.
\newblock {Lagrangian solutions to the transport--Stokes system}.
\newblock {\em Nonlinear Analysis}, 235:113333, 2023.

\bibitem{Leblond}
A.~Leblond.
\newblock {Well-posedness of the Stokes-transport system in bounded domains and
  in the infinite strip}.
\newblock {\em J. math pures Appl}, 158:120--143, 2022.

\bibitem{Lee2011OnSA}
Young-Ju Lee and Hengguang Li.
\newblock On stability, accuracy, and fast solvers for finite element
  approximations of the axisymmetric stokes problem by hood-taylor elements.
\newblock {\em SIAM J. Numer. Anal.}, 49:668--691, 2011.

\bibitem{Mecherbet2019}
A.~Mecherbet.
\newblock {Sedimentation of particles in Stokes flow}.
\newblock {\em Kinetic $\&$ Related Models}, 12(5):995--1044, 2019.

\bibitem{Mecherbet2020}
A.~Mecherbet.
\newblock {On the sedimentation of a droplet in Stokes flow}.
\newblock {\em {Communications in Mathematical Sciences}}, 19(6):1627--1654,
  2020.

\bibitem{Mecherbet&Sueur}
A.~Mecherbet and F.~Sueur.
\newblock {A few remarks on the transport-Stokes system}.
\newblock {\em Preprint. ArXiv:2209.11637}, 2022.

\bibitem{MNG}
B.~Metzger, M.~Nicolas, and É. Guazzelli.
\newblock Falling clouds of particles in viscous fluids.
\newblock {\em J. Fluid Mech.}, 580:[283,301], 10 June 2007.

\bibitem{Mika}
J.~Mika.
\newblock Fundamental eigenvalues of the linear transport equation.
\newblock {\em Journal of Quantitative Spectroscopy and Radiative Transfer},
  11(6):879--891, 1971.

\bibitem{Rybczynski}
W.~Rybczynski.
\newblock \"{U}ber die fortschreitende bewegung einer fl\"ussigen kugel in
  einem z\"ahen medium.
\newblock {\em Bull. Acad. Sci. Cracovie}, pages 40--46, 1911.

\end{thebibliography}
	\section*{Acknowledgements}
	Authors would like to thank the organizers of the Sophie Germain PDE seminar in Université Paris Cit\'e, where their collaboration was initiated.
	They are also grateful to David Gérard-Varet and Richard H\"ofer for fruitful discussions related to this topic. 
	%A.M. is supported by the SingFlows project, Grant ANR-18-CE40-0027 of the French National Research Agency (ANR).

\end{document}